\definecolor{brickred}{RGB}{147,39,44}
\definecolor{darkgreen}{RGB}{0,103,0}
\definecolor{navyblue}{RGB}{0,0,128}
\numberwithin{equation}{section}
\declaretheoremstyle[spaceabove=6pt,
spacebelow=6pt,
headfont=\normalfont\bfseries,
notefont=\bfseries, 
notebraces={(}{)},
bodyfont=\itshape,
numberwithin=section,
postheadspace=0.6em]{thmsty}
\declaretheoremstyle[spaceabove=6pt,
spacebelow=6pt,
headfont=\normalfont\bfseries,
notefont=\bfseries,
notebraces={(}{)},
bodyfont=\itshape,
numberwithin=section,
postheadspace=0.6em,
sharenumber=theorem]{lemsty}
\declaretheoremstyle[spaceabove=6pt,
spacebelow=6pt,
headfont=\normalfont\bfseries,
bodyfont=\normalfont,
postheadspace=0.6em,
numbered=no,
qed=\qedsymbol]{prfsty}
\declaretheoremstyle[spaceabove=6pt,
spacebelow=6pt,
headfont=\normalfont\bfseries,
notefont=\bfseries,
notebraces={(}{)},
bodyfont=\normalfont,
numberwithin=section,
postheadspace=0.6em,
sharenumber=theorem]{defsty}
\declaretheoremstyle[spaceabove=6pt,
spacebelow=6pt,
headfont=\normalfont\bfseries,
bodyfont=\normalfont,
postheadspace=0.6em,
numberwithin=section,
sharenumber=theorem]{remsty}
\declaretheoremstyle[spaceabove=6pt,
spacebelow=6pt,
headfont=\normalfont\bfseries,
notefont=\mdseries,
notebraces={(}{)},
bodyfont=\normalfont,
postheadspace=0.6em,
numberwithin=section]{exasty}
\declaretheoremstyle[spaceabove=6pt,
spacebelow=6pt,
headfont=\normalfont\bfseries,
bodyfont=\normalfont,
numbered=no,
postheadspace=0.6em]{dittosty}
\declaretheorem[style=thmsty]{theorem}
\title{MUSIC Algorithm for Locating Point-Like Scatterers With Multiple Interactions}
\author{Nana Meng}
\date{\today}
\address{School of Mathematical Sciences and LPMC, Nankai University, Tianjin 300071, China}
\email{2120235241@mail.nankai.edu.cn}
\begin{document}
	\begin{abstract}
		We study the inverse problem of locating point sources from far-field data under plane wave incidence. A direct computational method is developed based on multiple scattering theory, using a novel indicator function to avoid iterative solving. The approach is efficient, noise-stable, and capable of resolving closely spaced or multiple sources. Numerical results validate its accuracy and robustness in various configurations. This work provides a practical computational tool for wave-based imaging and non-destructive evaluation.
		
		\vspace{1em}
		\noindent\textbf{Key Words.} MUSIC algorithm, plane-wave incidence, multiple scattering, far-field pattern, foldy-lax model
	\end{abstract}
	
	\maketitle

	\section{Introduction}
	
Inverse scattering theory, as a central interdisciplinary field at the intersection of computational mathematics and applied physics, aims to reconstruct unknown parameters or structural configurations of a system from observed scattering data. It plays a vital role in various applications such as medical imaging, non-destructive testing, geophysical exploration, and radar remote sensing. When an incident wave interacts with a scatterer, the resulting far-field pattern carries essential information about the location, shape, and physical properties of the scatterer. Reconstructing these features from far-field data—known as the inverse scattering problem—has become a fundamental research direction in modern computational mathematics.

In recent years, a variety of theoretical frameworks and numerical algorithms have been developed to address inverse problems for different scattering models. Among them, point sources or point scatterers have emerged as fundamental models due to their simplicity and effectiveness in representing small-scale inhomogeneities, particulate systems, or defects in heterogeneous media. Particularly in multiscale scattering systems, point scatterers serve as low-dimensional approximations of complex structures, offering analytically tractable pathways for tackling high-dimensional nonlinear inverse problems.

In terms of mathematical modeling of point scatterers, Albeverio et al. [1] established a rigorous framework based on the theory of singular perturbations of differential operators, providing a solid foundation for describing point interactions through $\delta$-type potentials in wave equations. Subsequently, [2] explored the intrinsic relationship between the Krein resolvent formula and boundary conditions, offering a powerful tool for analyzing the spectral and scattering properties of point-like interactions. For the reconstruction of small scatterers, Amiri-Hezaveh et al. [3] proposed a method based on boundary measurements, revealing the sensitivity of far-field patterns to localized perturbations and thereby advancing the development of multiscale inversion theory.

At the level of the overall theoretical framework for inverse scattering, the monograph by Colton and Kress [4] systematically established the mathematical foundations of inverse acoustic and electromagnetic scattering problems. They thoroughly investigated the compactness of the far-field operator, uniqueness theorems, and the inherent ill-posedness of inverse problems, laying the cornerstone for the field. Later, Kirsch and Grinberg [5] advanced qualitative reconstruction methods, emphasizing the possibility of inferring scatterer characteristics directly from observable data without resorting to regularized nonlinear optimization.

Point source models are closely related to multiple scattering theory. Foldy [6] first introduced a mean-field theory for wave propagation in dense arrays of point scatterers, establishing a closed-form expression for the coupled interactions among multiple scatterers—a concept that has since been widely extended. Lax [7] further developed a statistical averaging approach for wave propagation in random media, leading to the well-known Foldy-Lax multiple scattering theory, which provides an effective framework for modeling large-scale point scatterer systems. Moreover, [8] studied a hybrid acoustic scattering model involving both extended and point-like scatterers, highlighting significant differences in their frequency dependence and directional far-field characteristics, thus offering theoretical support for joint inversion strategies.

For multiscale and multi-target inverse problems, recent studies have proposed a range of efficient numerical methods. For instance, [9,10] developed the generalized Foldy-Lax formulation to accurately simulate strongly coupled systems of numerous point scatterers, significantly improving computational efficiency for large-scale problems. On the other hand, decomposition methods and subspace-based algorithms—such as the MUSIC algorithm—have demonstrated superior performance in target localization. Works [13–16] provided a systematic analysis of these methods, proving their capability to achieve sub-wavelength resolution even under limited far-field data. Notably, [12] investigated scattering in the low-frequency regime under lubrication contact conditions, offering new insights into weak signal extraction and noise-robust inversion.

In inverse problems involving complex media, fluid-solid coupling effects cannot be neglected. Studies [19–21] deeply examined the interaction mechanisms between elastic bodies and surrounding fluid media, proposing numerical schemes suitable for solving forward and inverse problems in inhomogeneous backgrounds. In particular, [18] achieved rapid localization of multiscale electromagnetic scatterers using only a single far-field measurement, demonstrating the great potential of efficient data acquisition strategies. Meanwhile, [23] combined near-field scanning techniques to study the simultaneous reconstruction of elastic bodies and point scatterers in fluid environments, thereby extending the applicability of traditional far-field imaging methods.

Furthermore, the theory of partial differential equations plays a fundamental role in scattering modeling. [22] systematically summarized the application of elliptic and hyperbolic equations in wave propagation modeling, reinforcing the mathematical basis for well-posedness analysis in inverse problems. [24] approached far-field data from a matrix-analytic perspective, investigating the spectral structure of the extended target response matrix, thus opening new avenues for low-rank approximation and compressive sensing of scattering data. Additionally, [17] analyzed the dynamical behavior of wave propagation under point interactions, deepening the understanding of transient scattering processes.

Building upon these advances, this paper focuses on the problem of locating point sources from far-field patterns under plane wave incidence, aiming to develop a method that is both theoretically sound and numerically feasible. Specifically, our study proceeds along the following lines: first, we establish a mathematical model for point source scattering under plane wave excitation, clarifying the nonlinear mapping between far-field patterns and source locations; second, we design a novel indicator function tailored to the characteristics of point sources, enhancing resolution in multi-source configurations; finally, we conduct systematic numerical experiments to validate the localization accuracy of the proposed algorithm, evaluating its stability under noisy data and adaptability to different spatial scales.

The main contributions of this work are threefold: (i) refining the theoretical modeling framework for point source inversion; (ii) proposing an efficient far-field-based localization strategy; and (iii) providing a reliable computational tool for rapid detection of point-like defects or anomalies in practical engineering applications. The results are expected to find further applications in non-destructive evaluation, subsurface target identification, and distributed sensor networks.
	\section{Problem formulation}
	
	\subsection{Mathematical model}
There are $M$ point scatterers $\{y_1, y_2, \ldots, y_M\}$ in $\mathbb{R}^3$. We assume that the incident wave $u^i$ is a plane wave. We define $\mathbb{S}^2:=\{x \in \mathbb{R}^3 : |x|=1 \}$. Take a subset of $\mathbb{S}^2$, $Q = \{\theta_j : j = 1, 2, \ldots, N\}$, $N \geq M$.
	The incident wave is given by
	\begin{align}
		u^i &= e^{ikx \cdot d}, 
	\label{eq:eq1}
	\end{align}
where $ d \in \mathbb{S}^2 $ is the direction of propagation. If the scattering field is $u^s$, then 
	\begin{align}
         u &= u^i + u^s , \quad\quad\quad\quad\text{in } \mathbb{R}^3 \setminus \{y_1, y_2, \ldots, y_M\}.
     \label{eq:eq2}
    \end{align}  
      
Let $k := \frac{\omega}{c}$ represent the wave number at which sound waves propagate in the background medium, where $\omega$ is the frequency and $c > 0$ is the speed of sound. We consider the multiple scattering between point scatterers. We define $\Gamma_1 u, \Gamma_2 u$ as follows:
	\begin{equation}
		(\Gamma_1 u)_j := \lim_{x \to y_j} 4\pi |x - y_j| u(x),
	\end{equation}

	\begin{equation}
		(\Gamma_2 u)_j := \lim_{x \to y_j} \left( u(x) - \frac{(\Gamma_1 u)_j}{4\pi |x - y_j|} \right).
	\end{equation}
	
By definition, the total field exhibits asymptotic behavior in point scatterers as follows:
	$$
	u(x, d) = \frac{(\Gamma_1 u)_j}{4\pi |x - y_j|} + (\Gamma_2 u)_j + o(1), \quad\quad\quad\quad \text{as} \quad x \to y_j.
	$$
	
The boundary condition is the "impedance" type, defined as follows:
	\begin{equation}
		(\Gamma_2 u)_j = \alpha_j (\Gamma_1 u)_j, \quad\quad\quad \alpha_j \in \mathbb{C}, \quad\quad\quad j = 1, 2, \ldots, M.
	\label{eq:impedance}
	\end{equation}
	
The boundary condition \eqref{eq:impedance} indicates that the first singular term of $ u $ is proportional to the second singular term, and the "impedance" coefficient $ \alpha_j $ is referred to as the scattering coefficient on the $ j $-th scatterer, which describes the scattering intensity of the scatterer. Let $ \alpha = (\alpha_1, \alpha_2, \dots, \alpha_M) $.

The scattering field $ u^s $ satisfies the following wave equation together with the Sommerfeld radiation condition:
\begin{align}
\Delta u^s + k^2 u^s &= 0, \quad \quad \quad \quad \text{in } \mathbb{R}^3 \setminus \{y_1, y_2, \cdots, y_m\}
,
\label{eq:eq3} \\
\lim_{r \to \infty} r \left( \frac{\partial u^s}{\partial r} - iku^s \right) &= 0, \quad\quad\quad\quad\quad
r = |x|.
\label
{eq:radiation}
\end{align}

From the radiation conditions \eqref{eq:radiation}, it can be seen that the scattering field $u ^ s$ exhibits asymptotic behavior of spherical waves:
\begin{equation}
	u^s(x) = \frac{e^{ik|x|}}{4\pi |x|} \left\{ u^\infty(\hat{x}) + O\left(\frac{1}{|x|}\right) \right\}, \quad\quad\quad\quad\quad \text{as} \; |x| \to \infty.
	\label{eq:eq4}
\end{equation}
Define the matrix $ P(k, \alpha) \in \mathbb{C}^{M \times M} $, 
\begin{equation}
	[P(k,\alpha)]_{m,j} := 
	\begin{cases} 
		\Phi_k(y_m,y_j),         & m \neq j, \\
		\frac{ik}{4\pi}-\alpha_j, & m = j,
	\end{cases}
	\label{eq:eq5}
\end{equation}
$$
\Phi_k(y_m, y_j) = \frac{e^{ik|y_m - y_j|}}{4\pi |y_m - y_j|}.
$$
where $ \Phi_k(y_m, y_j) $ is the fundamental solution of the Helmholtz equation.

Define the set $ S_\alpha := \{ k > 0 : \det(P(k, \alpha)) = 0 \} $.
	\begin{equation}
		\begin{cases}
			\Delta u^s + k^2 u^s = 0, & \text{in } \mathbb{R}^3 \setminus \{y_1, y_2, \ldots, y_M\}, \quad \quad \quad \quad \quad   \\
			u^s(x) = \frac{e^{ik|x|}}{4\pi |x|} \left\{ u^\infty(\hat{x}) + O\left(\frac{1}{|x|}\right) \right\}, & |x| \to \infty,  \quad \quad \quad \quad\quad \quad \quad \quad \quad \quad \quad   \\
			\lim\limits_{r \to \infty} r \left( \frac{\partial u^s}{\partial r} - ik u^s \right) = 0, & r = |x|, \quad \quad  \quad \quad \quad \quad \quad \quad \quad \quad \quad\quad \\
			(\Gamma_2 u)_j = \alpha_j (\Gamma_1 u)_j,  & \alpha_j \in \mathbb{C}, j = 1, 2, \ldots, M.  \quad \quad \quad \quad \quad   
		\end{cases}	
		\label{eq:problem}			
	\end{equation}
\begin{theorem}
	Assume $ k \notin S_\alpha $ (since $ \det(P(k, \alpha)) $ is an analytic function of $ k $, such $ k $ satisfying $ k \notin S_\alpha $ must exist), and $ \text{Im} \, \alpha_j \geq 0 $, $ j = 1, 2, \ldots, M $. The expression for the solution corresponding to incident point source waves is obtained from reference \cite{ref23}. Based on this result, the unique solution to the problem \eqref{eq:problem} in $ H_{\text{loc}}^1(\mathbb{R}^3 \setminus \{y_1, y_2, \ldots, y_M\}) $ is given by:
	
	\begin{equation}
		u^s(x, d) = -\sum_{m,j=1}^{M} \left[ P^{-1}(k, \alpha) \right]_{m,j} e^{iky_j \cdot d} \Phi_k(x, y_m),
	\label{eq:disanci}	
	\end{equation}

\end{theorem}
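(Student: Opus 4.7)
The plan is to reduce the problem to a finite-dimensional linear algebra problem via a superposition-of-fundamental-solutions ansatz. I would start with
\[
u^s(x) = \sum_{m=1}^M c_m \, \Phi_k(x, y_m),
\]
where the coefficients $c_m \in \mathbb{C}$ are to be determined. By construction this already satisfies the Helmholtz equation in $\mathbb{R}^3 \setminus \{y_1,\dots,y_M\}$ and the Sommerfeld radiation condition \eqref{eq:radiation}, so only the impedance conditions \eqref{eq:impedance} remain to be enforced.

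Next, I would expand $\Phi_k(x,y_j) = \frac{1}{4\pi|x-y_j|} + \frac{ik}{4\pi} + O(|x-y_j|)$ as $x \to y_j$ and read off from the total field $u = u^i + u^s$ that
\[
(\Gamma_1 u)_j = c_j, \qquad (\Gamma_2 u)_j = e^{iky_j\cdot d} + \frac{ik}{4\pi}\, c_j + \sum_{m\neq j} c_m \, \Phi_k(y_j, y_m).
\]
Substituting into \eqref{eq:impedance} and using the symmetry $\Phi_k(y_j,y_m) = \Phi_k(y_m,y_j)$ yields the $M\times M$ linear system $P(k,\alpha)\, c = -b$, where $b_j := e^{iky_j\cdot d}$. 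By the hypothesis $k\notin S_\alpha$ the matrix $P(k,\alpha)$ is invertible; exploiting its symmetry gives $c_m = -\sum_j [P^{-1}(k,\alpha)]_{m,j}\,e^{iky_j\cdot d}$, and re-inserting this into the ansatz produces exactly the formula \eqref{eq:disanci}.

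For uniqueness, let $w \in H^1_{\text{loc}}(\mathbb{R}^3 \setminus \{y_1,\dots,y_M\})$ solve the homogeneous version of \eqref{eq:problem} (i.e.\ with $u^i \equiv 0$). Setting $c_j := (\Gamma_1 w)_j$, the function $\tilde w := w - \sum_{j} c_j\, \Phi_k(\cdot, y_j)$ has no $|x-y_j|^{-1}$ singularity at any scatterer, so elliptic regularity promotes it to a classical radiating solution of $(\Delta+k^2)\tilde w = 0$ on all of $\mathbb{R}^3$. The standard uniqueness theorem for radiating Helmholtz solutions on free space then forces $\tilde w \equiv 0$, hence $w = \sum_j c_j \Phi_k(\cdot, y_j)$. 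Applying the homogeneous impedance condition to this representation reduces to $P(k,\alpha)\, c = 0$, and the hypothesis $k\notin S_\alpha$ then forces $c=0$, so $w\equiv 0$.

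The main obstacle is the rigorous passage in the uniqueness step from an $H^1_{\text{loc}}$-solution of the punctured space, with the prescribed singular asymptotics, to a classical entire solution after subtracting off the fundamental-solution pieces; one must check that $\tilde w$ picks up no distributional contribution at any $y_j$. The hypothesis $\text{Im}\,\alpha_j \ge 0$, though not directly invoked once $k\notin S_\alpha$ is assumed, is what underwrites---via a Rellich-type energy identity on $B_R \setminus \bigcup_j B_\varepsilon(y_j)$---the non-emptiness of the admissible set of wave numbers, as exploited for the point-source-incident case in \cite{ref23}. The present theorem only needs to track that replacing the incident point source by a plane wave modifies the right-hand side of the linear system to $b_j = e^{iky_j\cdot d}$, and so the structural argument carries over.
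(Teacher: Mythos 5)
Your derivation is correct, but it is worth noting that the paper itself offers no proof of this theorem: it simply asserts the formula, deferring the underlying well-posedness result to reference \cite{ref23} (where the analogous statement is established for point-source incidence). Your proposal therefore supplies a genuinely self-contained argument that the paper omits. The existence half is clean: the ansatz $u^s = \sum_m c_m \Phi_k(\cdot,y_m)$ automatically satisfies the Helmholtz equation and the radiation condition, the expansion $\Phi_k(x,y_j) = \tfrac{1}{4\pi|x-y_j|} + \tfrac{ik}{4\pi} + O(|x-y_j|)$ correctly identifies $(\Gamma_1 u)_j = c_j$ and $(\Gamma_2 u)_j = e^{iky_j\cdot d} + \tfrac{ik}{4\pi}c_j + \sum_{m\neq j}c_m\Phi_k(y_j,y_m)$, and enforcing \eqref{eq:impedance} yields exactly $P(k,\alpha)c = -b$ with $b_j = e^{iky_j\cdot d}$, recovering \eqref{eq:disanci}; this is also consistent with the Foldy--Lax identification $g_j(k) = (\alpha_j - \tfrac{ik}{4\pi})^{-1}$ carried out in Section 2.2, which serves as an independent cross-check. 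The uniqueness half is the part that actually requires care, and you correctly isolate the crux: after subtracting $\sum_j (\Gamma_1 w)_j \Phi_k(\cdot,y_j)$ from a homogeneous solution $w$, one must show the point singularities of $\tilde w$ are removable. Since the boundary condition \eqref{eq:impedance} presupposes the asymptotic expansion $u = \tfrac{(\Gamma_1 u)_j}{4\pi|x-y_j|} + (\Gamma_2 u)_j + o(1)$, the difference $\tilde w$ is bounded near each $y_j$, and a bounded (indeed $o(|x-y_j|^{-1})$) solution of the Helmholtz equation in a punctured ball in $\mathbb{R}^3$ extends across the puncture; this standard removable-singularity fact closes the gap you flag, after which $P(k,\alpha)c = 0$ and $k\notin S_\alpha$ force $w\equiv 0$. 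Your side remark that $\operatorname{Im}\alpha_j \geq 0$ is not invoked once $k\notin S_\alpha$ is assumed, but rather guarantees (via a Rellich-type identity) that $S_\alpha$ excludes the relevant wave numbers, is an accurate reading of the role that hypothesis plays in \cite{ref23} and is left unexplained in the paper.
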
	
The far-field pattern of $ u^s(x, d) $ is given by:
		$$
			u^\infty(\hat{x}, d) = - \sum_{m,j=1}^M [P^{-1}(k, \alpha)]_{m,j} e^{ik(y_j \cdot d - \hat{x}\cdot y_m)}, \quad\quad\quad\quad\quad\hat{x} \in \mathbb{S}^2, d \in \mathbb{S}^2.
		$$

In this paper, we determine a subset $ Q $ of $ \mathbb{S}^2 $. The cardinality of $ Q $ is denoted by $ N $, which satisfies that $ N \geq M $, with $ M $ being the number of point sources. Both the incident directions $ d $ and the observation directions $ \hat{x} $ are taken from the set $ Q $. For different pairs of incident and observation directions, the corresponding far-field pattern is uniquely determined. This allows us to construct a far-field pattern matrix $ F $. We then define a suitable test function $ I_z $. By evaluating this test function at any sampling point, we observe that its value becomes unbounded (i.e., tends to infinity) if and only if the sampling point coincides with one of the true point sources. This property provides a solid foundation for identifying the locations of the actual point sources.

The detailed formulation and theoretical analysis will be presented in the following sections.

	\subsection{Connections with Other Models}
	
	 In the context of classical wave interactions (e.g., light waves, sound waves) with matter, especially when the matter consists of many distributed scatterers, the Foldy-Lax Approximation (FLA) is a commonly used theoretical framework, primarily for addressing multiple scattering problems. The FLA neglects multiple scattering waves from other scatterers, i.e., waves that are scattered by multiple other scatterers before reaching the current scatterer. When calculating the scattering behavior of a particular scatterer, it can be approximately assumed that the incident wave it experiences originates from two components: the original wave emitted by an external source and the superposition of single-scattered waves from all other scatterers. Let $ A_j $ denote the scattering intensity of the $ j $-th point scatterer. Then, the total field $ u $ can be obtained by superposition:
	\begin{equation}
		u(x) = u^i(x, k) + \sum_{j=1}^{M} A_j \Phi_k(x, y_j).
	\label{eq:connection1}
	\end{equation}
	The external field $ u_j $ acting on the $ j $-th scatterer is given by:
	\begin{align*}
		u_j(x) &= u(x) - A_j \Phi_k(x, y_j)\\
		&= u^i(x, k) + \sum_{i \neq j} A_i \Phi_k(x, y_i).
	\end{align*}
	Assume that the scattering coefficient $ A_j $ is proportional to the external field $ u_j(y_j) $ acting on the $ j $-th scatterer, i.e.,
	\begin{equation}
		A_j = g_j(k) u_j(y_j) .
	\label{eq:connection2}
	\end{equation}
	Then
	\begin{equation}
		u_j(y_j) = u^i(y_j, k) + \sum_{i \neq j} g_i(k) u_i(y_i) \Phi_k(y_j, y_i).
		\label{eq:connection}
	\end{equation}
	Assume that the incident field $ u^i $ is a plane wave with wave number $ k $ and direction $ d $, and it is scattered by a single point scatterer located at $ y $. Then we have
	\begin{align}
		u(x) &= u^i(x, d) + g(k) u^i(y, d) \Phi_k(x, y)   \notag \\
		&= e^{ikx \cdot d} + g(k) e^{iky \cdot d} \Phi_k(x, y).
	\end{align}
	We now demonstrate that the aforementioned Foldy-Lax model is equivalent to the model presented in this paper.Let $ C_j = u_j(y_j) $, from \eqref{eq:connection}, we have
	$$
	C_j = e^{ik y_j \cdot d} + \sum_{i \neq j} C_i g_i(k) \Phi_k(y_j, y_i).
	$$
	We can get
	$$
	\begin{cases}
		C_1 - g_2(k) \Phi_k(y_1, y_2) C_2 - g_3(k) \Phi_k(y_1, y_3) C_3 - \cdots - g_M(k) \Phi_k(y_1, y_M) C_M = e^{ik y_1 \cdot d} ,\\
		-g_1(k) \Phi_k(y_2, y_1) C_1 + C_2 - g_3(k) \Phi_k(y_2, y_3) C_3 - \cdots - g_M(k) \Phi_k(y_2, y_M) C_M = e^{ik y_2 \cdot d} ,\\
		\vdots \\
		-g_1(k) \Phi_k(y_M, y_1) C_1 - g_2(k) \Phi_k(y_M, y_2) C_2 - g_3(k) \Phi_k(y_M, y_3) C_3 - \cdots + C_M = e^{ik y_M \cdot d} ,
	\end{cases}
	$$
	
	i.e.
	$$
	\begin{pmatrix}
		1 & -g_2(k) \Phi_k(y_1, y_2)  & \cdots & -g_M(k) \Phi_k(y_1, y_M) \\
		-g_1(k) \Phi_k(y_2, y_1) & 1  & \cdots & -g_M(k) \Phi_k(y_2, y_M) \\
		\vdots & \vdots   & \ddots  & \vdots \\
		-g_1(k) \Phi_k(y_M, y_1) & -g_2(k) \Phi_k(y_M, y_2) & \cdots & 1 
	\end{pmatrix}
	\begin{pmatrix}
		C_1 \\
		C_2 \\
		\vdots \\
		C_M
	\end{pmatrix}
	=
	\begin{pmatrix}
		e^{iky_1 \cdot d} \\
		e^{iky_2 \cdot d} \\
		\vdots \\
		e^{iky_M \cdot d}
	\end{pmatrix}.
	$$\\
	
	Let \( J = \begin{pmatrix}
		1 & -g_2(k) \Phi_k(y_1, y_2)  & \cdots & -g_M(k) \Phi_k(y_1, y_M) \\
		-g_1(k) \Phi_k(y_2, y_1) & 1 & \cdots & -g_M(k) \Phi_k(y_2, y_M) \\
		\vdots & \vdots  & \ddots  & \vdots \\
		-g_1(k) \Phi_k(y_M, y_1) & -g_2(k) \Phi_k(y_M, y_2)  & \cdots & 1
	\end{pmatrix}
	\).\\
	
	Then\quad 
	$$	\begin{pmatrix}
		c_1 \\
		c_2 \\
		\vdots \\
		c_M
	\end{pmatrix}
	= J^{-1}
	\begin{pmatrix}
		e^{iky_1 \cdot d} \\
		e^{iky_2 \cdot d} \\
		\vdots \\
		e^{iky_M \cdot d}
	\end{pmatrix}.
	$$\\
	From\eqref{eq:connection1} and \eqref{eq:connection2}, we have
	\begin{align}
		u(x) &= u^i(x,k) + \sum_{j=1}^{M} g_j(k) c_j \Phi_k(x,y_j)  \notag\\
		&= e^{ikx \cdot d} + \left( g_1(k) \Phi_k(x,y_1),\quad g_2(k) \Phi_k(x,y_2),\quad  \ldots,\quad  g_M(k) \Phi_k(x,y_M) \right)
		\begin{pmatrix}
			c_1 \\
			c_2 \\
			\vdots \\
			c_M
		\end{pmatrix}   \notag \\
		&= e^{ikx \cdot d} + \sum_{j,m=1}^{M} g_m(k) \Phi_k(x,y_m)  [J^{-1}]_{m,j} e^{iky_j \cdot d} .
	\label{eq:connection3}
	\end{align}
	Comparing \eqref{eq:eq5} and \eqref{eq:connection3}, we have
      $$  - [P^{-1}(k, \alpha)]_{m,j} = g_m(k) [J^{-1}]_{m,j}.  $$
 The expression for the total field is given by:
	\begin{equation}
		u(x) = e^{ikx \cdot d} - \sum_{m,j=1}^{M} [P^{-1}(k, \alpha)]_{m,j} e^{iky_j \cdot d} \Phi_k(x, y_m).
	\label{eq:connection4}
	\end{equation}

	i.e.
	\begin{align*}
		P(k, \alpha) &= -J 
		\begin{pmatrix}
			g_1^{-1}(k) & 0 & \cdots & 0 \\
			0 & g_2^{-1}(k) & \cdots & 0 \\
			\vdots & \vdots & \ddots & \vdots \\
			0 & 0 & \cdots & g_M^{-1}(k)
		\end{pmatrix} \\
		&= -
		\begin{pmatrix}
			1 & -g_2(k) \Phi_k(y_1, y_2) & \cdots & -g_M(k) \Phi_k(y_1, y_M) \\
			-g_1(k) \Phi_k(y_2, y_1) & 1 & \cdots & -g_M(k) \Phi_k(y_2, y_M) \\
			\vdots & \vdots & \ddots & \vdots \\
			-g_1(k) \Phi_k(y_M, y_1) & -g_2(k) \Phi_k(y_M, y_2) & \cdots & 1
		\end{pmatrix} \\
		&\quad  
		\begin{pmatrix}
			g_1^{-1}(k) & 0 & \cdots & 0 \\
			0 & g_2^{-1}(k) & \cdots & 0 \\
			\vdots & \vdots & \ddots & \vdots \\
			0 & 0 & \cdots & g_M^{-1}(k)
		\end{pmatrix} \\
		&=
		\begin{pmatrix}
			-g_1^{-1}(k) & \Phi_k(y_1, y_2) & \cdots & \Phi_k(y_1, y_M) \\
			\Phi_k(y_2, y_1) & -g_2^{-1}(k) & \cdots & \Phi_k(y_2, y_M) \\
			\vdots & \vdots & \ddots & \vdots \\
			\Phi_k(y_M, y_1) & \Phi_k(y_M, y_2) & \cdots & -g_M^{-1}(k) 
		\end{pmatrix}.
	\end{align*}
	Then $-g_j^{-1}(k) = \frac{\mathrm{i} k}{4 \pi} - \alpha_j$, $j=1, 2, \ldots, M$.  i.e.  $g_j(k) = \frac{1}{\alpha_j - \frac{\mathrm{i} k}{4 \pi}}$. Thus, the model in this paper is equivalent to the Foldy-Lax model.
	
	 The case without multiple scattering is a special case of the scenario of \eqref{eq:disanci} , where the matrix $ P $ degenerates into a diagonal matrix. In this case, the scattered field is given by (where $ \tau_m $ represents the scattering strength of the $ m $-th scatterer):
	
	$$ u^s(x, \theta) = \sum\limits_{m=1}^{M} \tau_m u^i(y_m, \theta) \Phi(x, y_m). $$
	
	\section{MUSIC algorithm}	
Let
	$$	A = - P^{-1}(k, \alpha) = 
	\begin{pmatrix}
		a_{11} & a_{12} & \cdots & a_{1M} \\
		a_{21} & a_{22} & \cdots & a_{2M} \\
		\vdots & \vdots & \ddots & \vdots \\
		a_{M1} & a_{M2} & \cdots & a_{MM}
	\end{pmatrix}
	\in \mathbb{C}^{M \times M}.
	$$
Define the matrix $ H \in \mathbb{C}^{M \times N} $, where $ H_{m,l} = e^{ik y_m \cdot \theta_l} $, for $ m = 1, 2, \ldots, M $ and $ l = 1, 2, \ldots, N $. Let $ H^* $ denote the conjugate transpose of $ H $, with $ H^* \in \mathbb{C}^{N \times M} $. Define the matrix $ F \in \mathbb{C}^{N \times N} $ by:
	
	$$
	F_{jl} := u^\infty(\theta_j, \theta_l) = \sum_{m,q=1}^M a_{mq} e^{ik (y_q \cdot \theta_l - \theta_j \cdot y_m)}.
	$$
	\begin{theorem} 
	The matrix $ F $ can be decomposed into: $ F = H^* A H $.
	\end{theorem}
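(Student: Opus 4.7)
The plan is to verify the factorization $F = H^* A H$ by computing the $(j,l)$-entry of the right-hand side directly from the definition of matrix multiplication and matching it against the given expression for $F_{jl}$.

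First I would record the entries of $H^*$. Since $k>0$ is real and both $y_m \in \mathbb{R}^3$ and $\theta_j \in \mathbb{S}^2 \subset \mathbb{R}^3$, the scalar $k\, y_m \cdot \theta_j$ is real, so
\[
(H^*)_{j,m} \;=\; \overline{H_{m,j}} \;=\; \overline{e^{i k\, y_m \cdot \theta_j}} \;=\; e^{-i k\, y_m \cdot \theta_j}.
\]
Next I would expand the $(j,l)$-entry of $H^* A H$ by two successive matrix multiplications. Writing out the double sum over the intermediate indices $m$ and $q$ (both ranging over $1,\dots,M$), I obtain
\[
(H^* A H)_{jl} \;=\; \sum_{m=1}^{M} \sum_{q=1}^{M} (H^*)_{j,m}\, a_{mq}\, H_{q,l} \;=\; \sum_{m,q=1}^{M} a_{mq}\, e^{-i k\, y_m \cdot \theta_j}\, e^{i k\, y_q \cdot \theta_l}.
\]
Combining the two exponentials gives the factor $e^{i k (y_q \cdot \theta_l - \theta_j \cdot y_m)}$, which is exactly the kernel appearing in the definition of $F_{jl}$ just above the theorem statement. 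Hence $(H^* A H)_{jl} = F_{jl}$ for every $j,l \in \{1,\dots,N\}$, which proves the claimed factorization.

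There is no real obstacle here: the result is essentially a bookkeeping identity, a rewriting of the far-field formula derived in the previous section as a matrix product. The only point that requires a moment of care is the conjugation step—one must use that the phases in $H$ are purely imaginary exponentials of real quantities, so that conjugation amounts to flipping the sign of $k\, y_m \cdot \theta_j$. Once this is noted, the identification of the kernel in $(H^* A H)_{jl}$ with that in $F_{jl}$ is immediate, and the factorization is established. This representation will be used later to analyze the range and null-space structure of $F$, which is the basis for the MUSIC indicator function.
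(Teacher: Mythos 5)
Your proof is correct and follows essentially the same route as the paper: an entry-wise verification that $(H^*AH)_{jl}=\sum_{m,q=1}^{M}a_{mq}e^{ik(y_q\cdot\theta_l-y_m\cdot\theta_j)}=F_{jl}$, with the paper merely writing the intermediate products $AH$ and $H^*AH$ as explicit matrices rather than in summation notation. No gaps.
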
			
	\begin{proof}
		By the definition of $A$ and $H$, we have
		\begin{align*}
			 A H &=\begin{pmatrix}
			 		a_{11} & a_{12} & \cdots & a_{1M} \\
			 		a_{21} & a_{22} & \cdots & a_{2M} \\
			 		\vdots & \vdots & \ddots & \vdots \\
			 		a_{M1} & a_{M2} & \cdots & a_{MM}
			 		\end{pmatrix}
			 		\begin{pmatrix}
			 		e^{iky_1 \cdot \theta_1} & e^{iky_1 \cdot \theta_2} & \cdots & e^{iky_1 \cdot \theta_N} \\
			 		e^{iky_2 \cdot \theta_1} & e^{iky_2 \cdot \theta_2} & \cdots & e^{iky_2 \cdot \theta_N} \\
			 		\vdots & \vdots & \ddots & \vdots \\
			 		e^{iky_M \cdot \theta_1} & e^{iky_M \cdot \theta_2} & \cdots & e^{iky_M \cdot \theta_N}
					 \end{pmatrix} \\
				 &= \begin{pmatrix}
			 		\sum\limits_{q=1}^M a_{1q} e^{iky_q \cdot \theta_1} & \sum\limits_{q=1}^M a_{1q} e^{iky_q \cdot \theta_2} & \cdots & \sum\limits_{q=1}^M a_{1q} e^{iky_q \cdot \theta_N} \\
			 		\sum\limits_{q=1}^M a_{2q} e^{iky_q \cdot \theta_1} & \sum\limits_{q=1}^M a_{2q} e^{iky_q \cdot \theta_2} & \cdots & \sum\limits_{q=1}^M a_{2q} e^{iky_q \cdot \theta_N} \\
			 		\vdots & \vdots & \ddots & \vdots \\
			 		\sum\limits_{q=1}^M a_{Mq} e^{iky_q \cdot \theta_1} & \sum\limits_{q=1}^M a_{Mq} e^{iky_q \cdot \theta_2} & \cdots & \sum\limits_{q=1}^M a_{Mq} e^{iky_q \cdot \theta_N}
			 		\end{pmatrix},
		\end{align*}
		\begin{align*}
			H^* A H &= 
				\begin{pmatrix}
					e^{-iky_1 \cdot \theta_1} & e^{-iky_2 \cdot \theta_1} & \cdots & e^{-iky_M \cdot \theta_1} \\
					e^{-iky_1 \cdot \theta_2} & e^{-iky_2 \cdot \theta_2} & \cdots & e^{-iky_M \cdot \theta_2} \\
					\vdots & \vdots & \ddots & \vdots \\
					e^{-iky_1 \cdot \theta_N} & e^{-iky_2 \cdot \theta_N} & \cdots & e^{-iky_M \cdot \theta_N}
				\end{pmatrix}
				 \begin{pmatrix}
					\sum\limits_{q=1}^M a_{1q} e^{iky_q \cdot \theta_1} & \sum\limits_{q=1}^M a_{1q} e^{iky_q \cdot \theta_2} & \cdots & \sum\limits_{q=1}^M a_{1q} e^{iky_q \cdot \theta_N} \\
					\sum\limits_{q=1}^M a_{2q} e^{iky_q \cdot \theta_1} & \sum\limits_{q=1}^M a_{2q} e^{iky_q \cdot \theta_2} & \cdots & \sum\limits_{q=1}^M a_{2q} e^{iky_q \cdot \theta_N} \\
					\vdots & \vdots & \ddots & \vdots \\
					\sum\limits_{q=1}^M a_{Mq} e^{iky_q \cdot \theta_1} & \sum\limits_{q=1}^M a_{Mq} e^{iky_q \cdot \theta_2} & \cdots & \sum\limits_{q=1}^M a_{Mq} e^{iky_q \cdot \theta_N}
				\end{pmatrix}
			\\
			&= 
			\resizebox{\linewidth}{!}{$
				\begin{pmatrix}
					\sum\limits_{m,q=1}^M a_{mq} e^{ik(y_q \cdot \theta_1 - y_m \cdot \theta_1)} & \sum\limits_{m,q=1}^M a_{mq} e^{ik(y_q \cdot \theta_2 - y_m \cdot \theta_1)} & \cdots & \sum\limits_{m,q=1}^M a_{mq} e^{ik(y_q \cdot \theta_N - y_m \cdot \theta_1)} \\
					\sum\limits_{m,q=1}^M a_{mq} e^{ik(y_q \cdot \theta_1 - y_m \cdot \theta_2)} & \sum\limits_{m,q=1}^M a_{mq} e^{ik(y_q \cdot \theta_2 - y_m \cdot \theta_2)} & \cdots & \sum\limits_{m,q=1}^M a_{mq} e^{ik(y_q \cdot \theta_N - y_m \cdot \theta_2)} \\
					\vdots & \vdots & \ddots & \vdots \\
					\sum\limits_{m,q=1}^M a_{mq} e^{ik(y_q \cdot \theta_1 - y_m \cdot \theta_N)} & \sum\limits_{m,q=1}^M a_{mq} e^{ik(y_q \cdot \theta_2 - y_m \cdot \theta_N)} & \cdots & \sum\limits_{m,q=1}^M a_{mq} e^{ik(y_q \cdot \theta_N - y_m \cdot \theta_N)}
				\end{pmatrix}
				$}\\
			&=F.
		\end{align*}
	\end{proof}
Let $P$ be the orthogonal projection onto the range of $F$.
	\begin{theorem}
		Let $\{\theta_n : n \in \mathbb{N}\}$ be a countable set of directions satisfying the following prior condition: if an analytic function vanishes on all $\theta_n$, $n \in \mathbb{N}$, then the analytic function is identically zero on $\mathbb{S}^2$. Let $K$ be a compact subset of $\mathbb{R}^3$ containing all scatterers. For any sampling point $z \in \mathbb{R}^3$, define the test vector $\phi_z \in \mathbb{C}^N$ as:
			$$
			\phi_z = \big(e^{-ik\theta_1 \cdot z}, e^{-ik\theta_2 \cdot z}, \ldots, e^{-ik\theta_N \cdot z}\big)^T.
			$$
			Then there exists $N_0 \in \mathbb{N}$ such that for all $N \geq N_0$ and for all $z \in K$, we have
		\begin{align}
			z \in \{y_1, y_2, \ldots, y_M\} &\iff \phi_z \in Range(H^*) 
		\label{eq:first_iff} \\
			&\iff \phi_z \in Range(F)
	    \label{eq:second_iff} \\
			&\iff P\phi_z = 0 \notag
		\end{align}
		and $ \operatorname{Rank}(H) = M$.	
		\label{thm:important}
	\end{theorem}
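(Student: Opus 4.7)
The plan is to prove the four claims in sequence, leveraging the factorization $F = H^*AH$ from the previous theorem together with the invertibility of $A = -P^{-1}(k,\alpha)$ for $k \notin S_\alpha$. All three range equivalences hinge on a single geometric statement---that $\phi_z$ enters $\mathrm{Range}(H^*)$ precisely at the scatterer locations---which in turn rests on a linear-independence lemma for exponentials on $\mathbb{S}^2$.

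First I will prove the key lemma: for distinct $w_1,\ldots,w_L\in\mathbb{R}^3$, the functions $\theta\mapsto e^{ikw_\ell\cdot\theta}$ are linearly independent on $\mathbb{S}^2$. The cleanest route is to introduce $v(x):=\sum_\ell c_\ell \Phi_k(x,-w_\ell)$, whose far-field pattern is proportional to $\sum_\ell c_\ell e^{ikw_\ell\cdot\theta}$; if this vanishes on $\mathbb{S}^2$ then $v\equiv 0$ in $\mathbb{R}^3\setminus\{-w_\ell\}$ by Rellich's lemma and unique continuation, and matching singularities at $-w_\ell$ forces every $c_\ell=0$. With this in hand, $\mathrm{Rank}(H)=M$ follows: a row combination $\sum_m c_m e^{iky_m\cdot\theta_l}=0$ for all $l$ extends via the countable family $\{\theta_n\}$ and the prior hypothesis to a vanishing on $\mathbb{S}^2$, hence to $c_m\equiv 0$ by the lemma. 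Monotonicity of the rank as $N$ grows then supplies a threshold $N^{(1)}$ past which $\mathrm{Rank}(H_N)=M$.

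Next I turn to the equivalence $z\in\{y_1,\ldots,y_M\}\Leftrightarrow \phi_z\in\mathrm{Range}(H^*)$. The forward direction is automatic, since $\phi_{y_j}$ is the $j$-th column of $H^*$. For the converse, once $\mathrm{Rank}(H_N)=M$ the map $H^*$ is injective, so any $\beta$ in a representation $\phi_z=H^*\beta$ is uniquely determined by $M$ fixed linearly independent rows; call this $\beta(z)$, which is real-analytic in $z$. Then $\phi_z\in\mathrm{Range}(H^*_N)$ is equivalent to the vanishing at $\theta_1,\ldots,\theta_N$ of $\psi_{z,\beta(z)}(\theta):=e^{-ik\theta\cdot z}-\sum_m \beta_m(z)e^{-ik\theta\cdot y_m}$. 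If $z\notin\{y_m\}$ then $\{z,y_1,\ldots,y_M\}$ are distinct and the coefficient of $e^{-ik\theta\cdot z}$ equals $1$, so the lemma gives $\psi_{z,\beta(z)}\not\equiv 0$ on $\mathbb{S}^2$, and the prior hypothesis then yields some $\theta_n$ where it is nonzero. The main obstacle is upgrading this pointwise fact to a \emph{uniform} threshold $N_0$ valid for all $z\in K$: I plan to handle this via a compactness argument, observing that $\eta_N(z):=\max_{l\le N}|\psi_{z,\beta(z)}(\theta_l)|$ is continuous in $z$, nondecreasing in $N$, and satisfies $\sup_N \eta_N(z)>0$ exactly on $K\setminus\{y_j\}$; covering $K$ by neighborhoods on which some $\eta_N>0$ (together with the real-analytic structure of the nested zero sets, which forces the descending chain to stabilize) will produce a single finite $N_0$.

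The remaining equivalences are purely linear-algebraic. Since $F=H^*AH$ with $A$ invertible and $H:\mathbb{C}^N\to\mathbb{C}^M$ surjective (from $\mathrm{Rank}(H)=M$), I compute $\mathrm{Range}(F)=H^*AH(\mathbb{C}^N)=H^*A(\mathbb{C}^M)=H^*(\mathbb{C}^M)=\mathrm{Range}(H^*)$, which closes the second biconditional. The final equivalence involving $P$ is the defining property of the orthogonal projection associated with $\mathrm{Range}(F)$: interpreting $P$ as the projection onto $\mathrm{Range}(F)^\perp$ (the standard MUSIC noise-subspace convention), the statement $P\phi_z=0$ is exactly $\phi_z\in\mathrm{Range}(F)$, and the full chain of biconditionals is complete.
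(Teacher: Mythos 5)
Your overall architecture is sound and, for most of the theorem, coincides with the paper's: the forward implication of \eqref{eq:first_iff} is the trivial column observation; $\operatorname{Rank}(H)=M$ comes from the prior condition plus Rellich's lemma, unique continuation, and singularity matching (your monotonicity-of-rank remark is actually a welcome refinement, since the paper applies the prior condition to vanishing at only finitely many $\theta_j$); and the equivalences \eqref{eq:second_iff} and $P\phi_z=0$ are the same linear algebra ($F=H^*AH$ with $A$ invertible and $H$ surjective, then the definition of the orthogonal projection onto $\mathrm{Range}(F)^\perp$). Where you genuinely diverge is the crux of the theorem: the existence of a single $N_0$ valid for all $z\in K$. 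The paper proves this by contradiction with a compactness-and-blow-up argument: it extracts sequences $N_l\to\infty$, $z^{(l)}\in K\setminus\{y_m\}$ with normalized dependence coefficients, passes to a limit, and when the limit point degenerates to some $y_1$ it rescales by $\rho_l=|\mu^{(l)}+\lambda_1^{(l)}|+\sum_{m\ge2}|\lambda_m^{(l)}|+|z^{(l)}-y_1|$; the Taylor expansion then produces a dipole term $\mu a\,\nabla_y\Phi_k(x,y_1)$ in the limit, and the linear independence of monopole and dipole singularities yields the contradiction. Your route instead parametrizes the candidate coefficients as a real-analytic $\beta(z)$ and studies the nested zero sets $Z_N=\{z\in K:\psi_{z,\beta(z)}(\theta_l)=0,\ l\le N\}$.

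That route can be completed, but as written it has a soft spot precisely at the point where the paper does its real work. The first half of your uniformity argument --- covering $K$ by neighborhoods on which some $\eta_N>0$ --- cannot succeed on its own: every $\eta_N$ vanishes at each $y_j$ and is continuous, so no neighborhood of $y_j$ admits a positive lower bound for any fixed $N$, and $K\setminus\{y_1,\dots,y_M\}$ is not compact. The entire burden therefore falls on your parenthetical claim that the descending chain $Z_1\supseteq Z_2\supseteq\cdots$ of real-analytic zero sets stabilizes at some finite $N_0$ (whence $Z_{N_0}=\bigcap_N Z_N=\{y_1,\dots,y_M\}$). This is true, but it is not free: you need the Noetherianity of the ring of germs of real-analytic functions to stabilize the ideal chain $(g_1,\dots,g_N)$ at each point, and then a finite-subcover argument over the compact set $K$ to turn local stabilization into a global $N_0$. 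None of this is spelled out, and it is exactly the degenerate regime $z\to y_j$ --- which the paper handles explicitly via the dipole blow-up --- that your sketch currently glosses over. If you supply the germ-stabilization argument in full, your proof is a legitimate and arguably cleaner alternative; without it, the central uniformity claim of the theorem is asserted rather than proved.
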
 
	\begin{proof}First, we prove \eqref{eq:first_iff}.
		
 If $ z \in \{y_1, y_2, \ldots, y_M\} $, then there exists $ m $, $ 1 \leq m \leq M $, such that $ z = y_m $. The test vector $ \phi_z = \phi_{y_m} $ corresponds to a column of $ H^* \in \mathbb{C}^{N \times M} $, and thus $ \phi_z \in Range(H^*) $.
		
The contrapositive of this conclusion is as follows: For any $ N \in \mathbb{N} $, there exists a point $ z \in K \setminus \{y_1, y_2, \ldots, y_M\} $ such that the set of vectors $ \{\phi_{y_1}, \phi_{y_2}, \ldots, \phi_{y_M}, \phi_z\} $ is linearly independent. We now prove that the contrapositive holds by contradiction.
			
Assume that for any $ N \in \mathbb{N} $, there exists a point $ z \in K \setminus \{y_1, y_2, \ldots, y_M\} $ such that the set of vectors $ \{\phi_{y_1}, \phi_{y_2}, \ldots, \phi_{y_M}, \phi_z\} $ is linearly dependent. Then there exists a sequence $ N_l \to \infty $, $ \{z^{(l)}\} \subset K \setminus \{y_1, y_2, \ldots, y_M\} $, $ \{\lambda^{(l)}\} \subset \mathbb{C}^M $, and $ \{\mu^{(l)}\} \subset \mathbb{C} $, satisfying
		\begin{align}
			|\mu^{(l)}| + \sum_{m=1}^M |\lambda_m^{(l)}| = 1, \quad \mu^{(l)} e^{-ik z^{(l)} \cdot \theta_j} + \sum_{m=1}^M \lambda_m^{(l)} e^{-ik y_m \cdot \theta_j} = 0, \quad \text{for all} \; j = 1, 2, \ldots, N_l.
		\label{eq:range_1}
		\end{align}
		
Since the sequences are bounded, as $ l \to \infty $, there exist convergent subsequences such that $ z^{(l)} \to z \in K $, $ \lambda^{(l)} \to \lambda \in \mathbb{C}^M $, and $ \mu^{(l)} \to \mu \in \mathbb{C} $. Fix $ j \in \mathbb{N} $, and let $ l \to \infty $. Then we have
		
		\begin{align}
			|\mu| + \sum_{m=1}^M |\lambda_m| = 1, \quad \mu e^{-ik z \cdot \theta_j} + \sum_{m=1}^M \lambda_m e^{-ik y_m \cdot \theta_j} = 0.
		\label{eq:range_2}
		\end{align}
		
Since the above equation holds for every $ j \in \mathbb{N} $, by the prior condition on $ \{\theta_j\} $, we have
		
		$$
		\mu e^{-ik z \cdot \theta} + \sum_{m=1}^M \lambda_m e^{-ik y_m \cdot \theta} = 0, \quad \text{for all} \; \theta \in \mathbb{S}^2.
		$$
	
The left-hand side of the above equation represents the far-field pattern of $ \mu \Phi_k(x, z) + \sum\limits_{m=1}^M \lambda_m \Phi_k(x, y_m) $. By Rellich's lemma and the unique continuation, we have
		
		$$
		\mu \Phi_k(x, z) + \sum_{m=1}^M \lambda_m \Phi_k(x, y_m) = 0, \quad \text{for all} \; x \notin \{z, y_1, \ldots, y_M\}.
		$$
We consider two cases:
	\begin{enumerate}
\item If $ z \notin \{y_1, y_2, \ldots, y_M\} $, letting $ x \to z $, yields $ \mu = 0 $. Letting $ x \to y_m $ for $ m = 1, 2, \ldots, M $, we obtain $ \lambda_m = 0 $ for $ m = 1, 2, \ldots, M $. This contradicts the first equation of \eqref{eq:range_2}.
	
\item If $z \in \{y_1, y_2, \ldots, y_M\}$. Without loss of generality, we assume $ z = y_1 $. Then
			$$
			(\mu + \lambda_1) e^{-ik y_1 \cdot \theta} + \sum_{m=2}^M \lambda_m e^{-ik y_m \cdot \theta} = 0, \text{for all} \; \theta \in \mathbb{S}^2.
			$$
			
The left-hand side of the above equation represents the far-field pattern of $ (\mu + \lambda_1)\Phi_k(x, y_1) + \sum\limits_{m=2}^M \lambda_m \Phi_k(x, y_m) $. By Rellich's lemma and the unique continuation, we have
			$$
			(\mu + \lambda_1) \Phi_k(x, y_1) + \sum_{m=2}^M \lambda_m \Phi_k(x, y_m) = 0.
			$$
Let $x \to y_1$ and $x \to y_m$, then
			\begin{align}
				\mu + \lambda_1 = 0, \quad \lambda_m = 0, \quad m = 2, 3, \ldots, M.
			\label{eq:range_3}
			\end{align}
			
Rewriting \eqref{eq:range_1}, we obtain
			
		\begin{align}
			\begin{split}
				[\mu^{(l)} + \lambda_1^{(l)}] e^{-iky_1 \cdot \theta_j} + \mu^{(l)} [e^{-ikz^{(l)} \cdot \theta_j} - e^{-iky_1 \cdot \theta_j}] + \sum_{m=2}^M \lambda_m^{(l)} e^{-iky_m \cdot \theta_j} = 0, \\
				\text{for all } j = 1, 2, \ldots, N_l.
			\end{split}
			\label{eq:range_4}
		\end{align}
Let $\rho_l := |\mu^{(l)} + \lambda_1^{(l)}| + \sum\limits_{m=2}^M |\lambda_m^{(l)}| + |z^{(l)} - y_1|,$ which converges to zero as $l \to \infty$. By the Taylor formula, we have
			$$
			e^{-ikz^{(l)} \cdot \theta_j} - e^{-iky_1 \cdot \theta_j} = -ik\theta_j (z^{(l)} - y_1) e^{-iky_1 \cdot \theta_j} + o(|z^{(l)} - y_1|^2).
			$$
Let
			$$
			\tilde{\lambda}_1^{(l)} = \frac{\mu^{(l)} + \lambda_1^{(l)}}{\rho_l}, \quad \tilde{\lambda}_m^{(l)} = \frac{\lambda_m^{(l)}}{\rho_l}, \quad m=2,3,\ldots,M, \quad a^{(l)} = \frac{z^{(l)} - y_1}{\rho_l}.
			$$
			
For all $j=1,2,\ldots,N_l$,
			$$
			\left[ \tilde{\lambda}_1^{(l)} - ik \mu^{(l)} \theta_j a^{(l)} \right] e^{-iky_1 \cdot \theta_j} + \sum_{m=2}^M \tilde{\lambda}_m^{(l)} e^{-iky_m \cdot \theta_j} = o(|z^{(l)} - y_1|).
			$$
These sequences are also bounded, so we can find convergent subsequences. Taking $l \to \infty$, $\tilde{\lambda}_m^{(l)} \to \tilde{\lambda}_m$, $m=1,2,\ldots,M$, $a^{(l)} \to a$, we have
			\begin{align}
				\sum_{m=1}^{M} |\tilde{\lambda}_m| + |a| = 1,
			\label{eq:range_5}
			\end{align}
			$$
			[\tilde{\lambda}_1 - ik \mu \theta_j a] e^{-ik y_1 \cdot \theta_j} + \sum_{m=2}^M \tilde{\lambda}_m e^{-ik y_m \cdot \theta_j} = 0, \quad \text{for all} \; j \in \mathbb{N}.
			$$
By the prior condition on $ \theta_j $, we have
			\begin{align}
				[\tilde{\lambda}_1 - ik \mu \theta a ] e^{-ik y_1 \cdot \theta} + \sum_{m=2}^{M} \tilde{\lambda}_m e^{-ik y_m \cdot \theta} = 0, \quad \text{for all} \; \theta \in \mathbb{S}^2.
			\label{eq:range_6}
			\end{align}
The left-hand side of the above equation represents the far-field pattern of 
           $$ \tilde{\lambda}_1 \Phi_k(x, y_1) + \mu a \nabla_y \Phi_k(x, y_1) + \sum\limits_{m=2}^M \tilde{\lambda}_m \Phi_k(x, y_m). $$
By Rellich's lemma and the unique continuation, we have
			\begin{align*}
				\tilde{\lambda}_1 \Phi_k(x, y_1) + \mu a \nabla_y \Phi_k(x, y_1) + \sum_{m=2}^M \tilde{\lambda}_m \Phi_k(x, y_m) = 0, \quad \quad \text{for all} \; x \notin \{y_1, y_2, \ldots, y_M\}.
			\end{align*}
Letting $x \to y_1$ and $x \to y_m$, $m=2, 3, \ldots, M$, we can get $\tilde{\lambda}_m = 0$ when $m=2, 3, \ldots, M$. When $x \neq y_1$, we have
			\begin{align*}
				\tilde{\lambda}_1 \Phi_k(x, y_1) + \mu a \nabla_y \Phi_k(x, y_1) 
				&= \left[ \tilde{\lambda}_1 + \mu a \frac{y_1 - x}{|y_1 - x|} (ik - \frac{1}{|y_1 - x|}) \right] \frac{e^{ik |y_1 - x|}}{4\pi |y_1 - x|}. \\
				&= 0.
			\end{align*}
It is easy to see that $ \tilde{\lambda}_1 = 0 $ and $ \mu a = 0 $. From \eqref{eq:range_2} and \eqref{eq:range_3}, we know that $ |\mu| = \frac{1}{2} $, and thus $ a = 0 $, which contradicts \eqref{eq:range_5}. Therefore, the assumption does not hold, i.e., for any $ N \in \mathbb{N} $, there exists a point $ z \in K \setminus \{y_1, y_2, \ldots, y_M\} $ such that the set of vectors $ \{\phi_{y_1}, \phi_{y_2}, \ldots, \phi_{y_M}, \phi_z\} $ is linearly independent. That is, there exists $ N_0 \in \mathbb{N} $ such that for all $ N \geq N_0 $ and for all $ z \in K $, if $ \phi_z \in Range (H^*) $, then $ z \in \{y_1, y_2, \ldots, y_M\} $.
	\end{enumerate}		
Next, we prove that the rank of $ H $ is $ M $.

Define $ \beta_m $ as follows 
	
			$$
			\beta_m: = \left( e^{ik y_m \cdot \theta_1}, e^{ik y_m \cdot \theta_2}, \ldots, e^{ik y_m \cdot \theta_N} \right)^T, \quad m=1,2,\ldots,M,
			$$
where $ \beta_m $ is a column vector of $ H^T $.
We now prove that $ \beta_m $, $ m = 1, 2, \ldots, M $, are linearly independent. We assume on the contrary that $ \beta_m $, $ m = 1, 2, \ldots, M $, are linearly dependent. Then there exist non-vanishing coeffients $ b_m \in \mathbb{C} $, $ m = 1, 2, \ldots, M $, such that
			$$
			\sum_{m=1}^M b_m \beta_m = 0,
			$$
i.e.,
			$$
			\sum_{m=1}^M b_m e^{ik y_m \cdot \theta_j} = 0, \quad \quad \quad \quad \quad \text{for all} \; j=1,2,\ldots,N.
			$$
Let $\hat{x} := \frac{x}{|x|} \in \mathbb{S}^2$ and define $f(\hat{x}) = \sum\limits_{m=1}^M b_m e^{ik y_m \cdot \hat{x}}$. The function $ f(\hat{x}) $ is analytic on $ \mathbb{S}^2 $. Since $ f(\theta_j) = 0 $ for $ j = 1, 2, \ldots, N $, by the prior condition on $ \theta_j $, we have		
			$$
			f(\hat{x}) = \sum_{m=1}^M b_m e^{ik y_m \cdot \hat{x}} = 0, \quad \quad \quad \quad \quad \text{for all} \; \hat{x} \in \mathbb{S}^2.
			$$
Taking the conjugate on both sides, we have
			
			$$
			\overline{f(\hat{x})} = \sum_{m=1}^M \overline{b_m} e^{-ik y_m \cdot \hat{x}} = 0.
			$$
From the above equation, we know that the far-field pattern of $ \sum\limits_{m=1}^M \overline b_m \Phi_k(x, y_m) $ is zero. By Rellich's lemma and unique continuation, we have
			$$
			\sum_{m=1}^M \overline{b_m} \Phi_k(x, y_m) = 0.
			$$
Let $ x \to y_m $, $ m = 1, 2, \ldots, M $. Then we have $\overline b_m = 0 $, $ m = 1, 2, \ldots, M $, which implies $ b_m = 0 $, $ m = 1, 2, \ldots, M $. This is a contradiction. Therefore, $ \beta_m $, $ m = 1, 2, \ldots, M $, are linearly independent. Since $ N \geq M $, the rank of $ H^T $ is $ M $, and thus the ranks of $ H $ and $ H^* $ are also $ M $.
	
Finally, we prove \eqref{eq:second_iff}, i.e.,  Range $(H^*)$ = Range $(F) $.

    For any $ \psi \in$ Range$(H^*) $, there exists $ \varphi \in \mathbb{C}^M $ such that $ H^* \varphi = \psi $. Since $ A $ is an invertible matrix, there exists $ \eta \in \mathbb{C}^M $ such that $ \eta = A^{-1} \varphi \in \mathbb{C}^M $. Since the rank of $ H $ is $ M $, we have  Range$(H) = \mathbb{C}^M $. Thus, there exists $ \gamma^* \in \mathbb{C}^N $ such that $ H \gamma^* = \eta $. Then
			$$
			F(\gamma^*) = H^* A H (\gamma^*) = H^* A (\eta) = H^* \varphi = \psi.
			$$
	Hence $\psi \in$ Range$(F)$, i.e. Range$(H^*) \subseteq$ Range$(F)$.\\
For any $ \psi \in$ Range$(F) $, there exists $ \varphi \in \mathbb{C}^N $ such that $ F ( \varphi )= \psi $. Since $ F = H^* A H $, we have $ \psi = H^* (A H \varphi) $, which implies $ \psi \in$ Range$(H^*) $. Thus,  Range $(F) \subseteq$ Range$(H^*) $.
We conclude from above that Range $(F)$ = Range $(H^*) $.
	
	Since $ P : \mathbb{C}^N \to$ Range $(F)^\perp = N(F^*) $ is an orthogonal projection, $ \phi_z \in $Range$(F) $ is equivalent to $ P \phi_z = 0 $. There exists $ N_0 \in \mathbb{N} $ such that for all $ z \in K $, 
	$$
	z \in \{y_1, y_2, \ldots, y_M\} \iff \phi_z \in Range(H^*) \iff \phi_z \in Range(F) \iff P \phi_z = 0.
	$$
	\end{proof}
	\begin{theorem} Assume the conditions in Theorem \ref{thm:important} are fulfilled. Define the test function  $I_z = \frac{1}{|P \phi_z|}$. Then we have
		$$
		z \in \{y_1, y_2, \ldots, y_M\} \iff I_z = +\infty. 
		$$
	\end{theorem}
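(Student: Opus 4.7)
The plan is to derive this as an immediate corollary of Theorem \ref{thm:important}. The deep analytical content — the Rellich lemma and unique continuation argument, the Taylor expansion handling the delicate case $z \in \{y_1, \ldots, y_M\}$, and the rank computation for $H$ — has already been carried out there, so what remains is a short, essentially tautological reformulation in terms of a reciprocal.

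First I would unpack the definition of the indicator. Interpreting $1/0$ as $+\infty$ in the extended real line, and using the fact that the Euclidean norm on $\mathbb{C}^N$ separates points from the zero vector, I obtain the elementary chain
$$
I_z = +\infty \iff |P\phi_z| = 0 \iff P\phi_z = 0.
$$
Next, under the hypotheses inherited from Theorem \ref{thm:important} — in particular $N \geq N_0$ and $z \in K$ — that theorem supplies the final link
$$
P\phi_z = 0 \iff z \in \{y_1, y_2, \ldots, y_M\}.
$$
Concatenating these two biconditionals yields the stated equivalence.

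There is no genuine obstacle to overcome, which is the whole point of the theorem: it repackages the subspace-kernel criterion established in Theorem \ref{thm:important} into a scalar indicator that is numerically convenient. When one samples $z$ over a grid in $K$ and plots $z \mapsto I_z$, the equivalence predicts sharp singular peaks precisely at the true scatterer locations $y_1, \ldots, y_M$ and finite values elsewhere, which is exactly the mechanism that makes the MUSIC algorithm a practical imaging tool. The only minor point worth flagging in the write-up is the convention $1/0 = +\infty$; once that is stated, the proof is two lines.
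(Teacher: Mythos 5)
Your proposal is correct and follows essentially the same route as the paper: both reduce the statement to the tautology $I_z = +\infty \iff |P\phi_z| = 0 \iff P\phi_z = 0$ and then invoke the chain of equivalences already established in Theorem \ref{thm:important}. Your explicit flagging of the convention $1/0 = +\infty$ is a minor stylistic improvement over the paper's version, which leaves it implicit.
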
 
	\begin{proof}
	If $ z \in \{y_1, y_2, \ldots, y_M\} $, there exists $ 1 \leq m \leq M $ such that $ z = y_m $. From Theorem \ref{thm:important}, we know that $ \phi_z \in $Range $(F) $, which implies $ P \phi_z = 0 $. Therefore, $ I_z = +\infty $. 
		
	If $ I_z = +\infty $, we have $ P \phi_z = 0 $. By Theorem \ref{thm:important}, it follows that $ z \in \{y_1, y_2, \ldots, y_M\} $.
	\end{proof}

\section{Numerical Tests}
 We focus on solving the problem of point source localization using MATLAB. We first perform Singular Value Decomposition (SVD) on the matrix $F$ to obtain the orthonormal basis $q_i$ of the range space of $F$,  where $i = 1, 2, \dots, \text{rank}(F)$. We definethe following test function
			$$
				I_z = \frac{1}{\left| \phi_z - (\phi_z, q_i) \cdot q_i \right|}.
			$$

To validate the effectiveness of the algorithm, we conducted numerical experiments with three point sources. The positions of these point sources were set as $(5, 0)$, $(-5, 0)$ and $(3, 9)$. In numerics, we divided the space with a step size of $0.1$, used $20$ observation directions, set the wave number $k = 2\pi$, and let $\alpha = (1+i, 3+5i, -1+5i)$.

\subsection{Inversion Results from Noise-Free data}

In the absence of noise, the inversion results are shown in Figure A. In the figure, the green dots represent the point source positions obtained from theoretical inversion, while the red circles indicate the pre-defined point source positions. From the image, it is clear that the predicted point source positions perfectly overlap with the true positions. This demonstrates that the inversion algorithm based on SVD decomposition can accurately localize point sources in an ideal noise-free environment.

\subsection*{Inversion with Low Noise data}

We introduce a complex-valued additive noise model to simulate measurement errors in the data. The noisy data matrix $\mathbf{F}_{\text{noisy}}$ is constructed as follows
 		$$
 	      \mathbf{F}_{\text{noisy}} = \mathbf{F} + \delta \cdot \max_{i,j} |\mathbf{F}_{ij}| \cdot \left( U(-1, 1)^{N \times N} + i \cdot U(-1, 1)^{N \times N} \right),
	    $$
 where $U(-1, 1)$ represents the uniform distribution over the interval $[-1, 1]$, and $\delta> 0$ is a prescribed noise factor that determines the intensity of the perturbation relative to the largest absolute entry in $\mathbf{F}$.When noise is added to the data, the situation changes significantly. For example, when $\delta$ is $0.1\%$, the inversion results are shown in Figure B. Surprisingly, even with such a small amount of noise, the algorithm fails to correctly invert the point source positions. Through extensive testing and analysis, we identified the root cause: when small perturbations are introduced, the results of the SVD decomposition of matrix $F$ change drastically. This leads to large values of the test function $I_z$ across the entire test region, causing all positions in the space to potentially be identified as predicted point source locations, thereby resulting in inversion failure.

\subsection*{Improved Algorithm Based on Pseudo-Inverse}

To address the above noise sensitivity issue, we improved the algorithm using pseudo-inverse. Under the same conditions, we used the pseudo-inverse of matrix $F$ to compute the test function $I_z$ . 

We define the orthogonal projection matrix $\mathbf{Q}_{\text{proj}} \in \mathbb{C}^{N \times N}$ onto the left null space of $\mathbf{F} \in \mathbb{C}^{M \times N}$ as:
$$
\mathbf{Q}_{\text{proj}} = \mathbf{I}_N - \mathbf{F} \mathbf{F}^\dagger,
$$
where $\mathbf{F}^\dagger$ denotes the Moore--Penrose pseudoinverse of $\mathbf{F}$, and $\mathbf{I}_N$ is the $N \times N$ identity matrix.

For a sampling point $ z \in \mathbb{R}^3 $, we define the test vector $ \phi_z \in \mathbb{C}^N $ component-wise by
$$
	\phi_z = \big(e^{-ik\theta_1 \cdot z}, e^{-ik\theta_2 \cdot z}, \ldots, e^{-ik\theta_N \cdot z}\big)^T,
$$
where $\theta_j \in Q ,j =1 ,2 ,\cdots   ,N.$
Based on this construction, we define the test function $ I_z $ as follows:
	$$   
		I_z = \frac{1}{\left\| \mathbf{Q}_{\text{proj}} \phi_z \right\|},
	$$
where $ \|\cdot\| $ denotes the standard Euclidean norm. The function $ I_z $ achieves its maximum at the locations where $ \phi_z $ aligns with the range of $ \mathbf{F} $, and thus can be used to identify point sources or scatterers.

In the noise-free case, the results generated by the improved algorithm are shown in Figure C. As can be seen from the figure, the improved algorithm still performs well in localizing point sources, indicating that the pseudo-inverse-based computation method also exhibits good performance in noise-free environments.
 
\subsection*{Inversion Performance with High Noise data}

To further verify the robustness of the improved algorithm, we add noise to the new model using the same noise addition method described above. We added $20\%$ noise to the data. The inversion results under this condition are shown in Figure D. Despite serious noise pollution, it is evident from the image that the improved algorithm still successfully localizes the point sources. This demonstrates that using the pseudo-inverse of matrix $F$ to compute the test function significantly enhances the stability and reliability of the algorithm in noisy environments.
	
	\begin{figure}[htbp]
		\centering
		\begin{subfigure}[b]{0.48\textwidth}
			\centering
			\includegraphics[width=\textwidth]{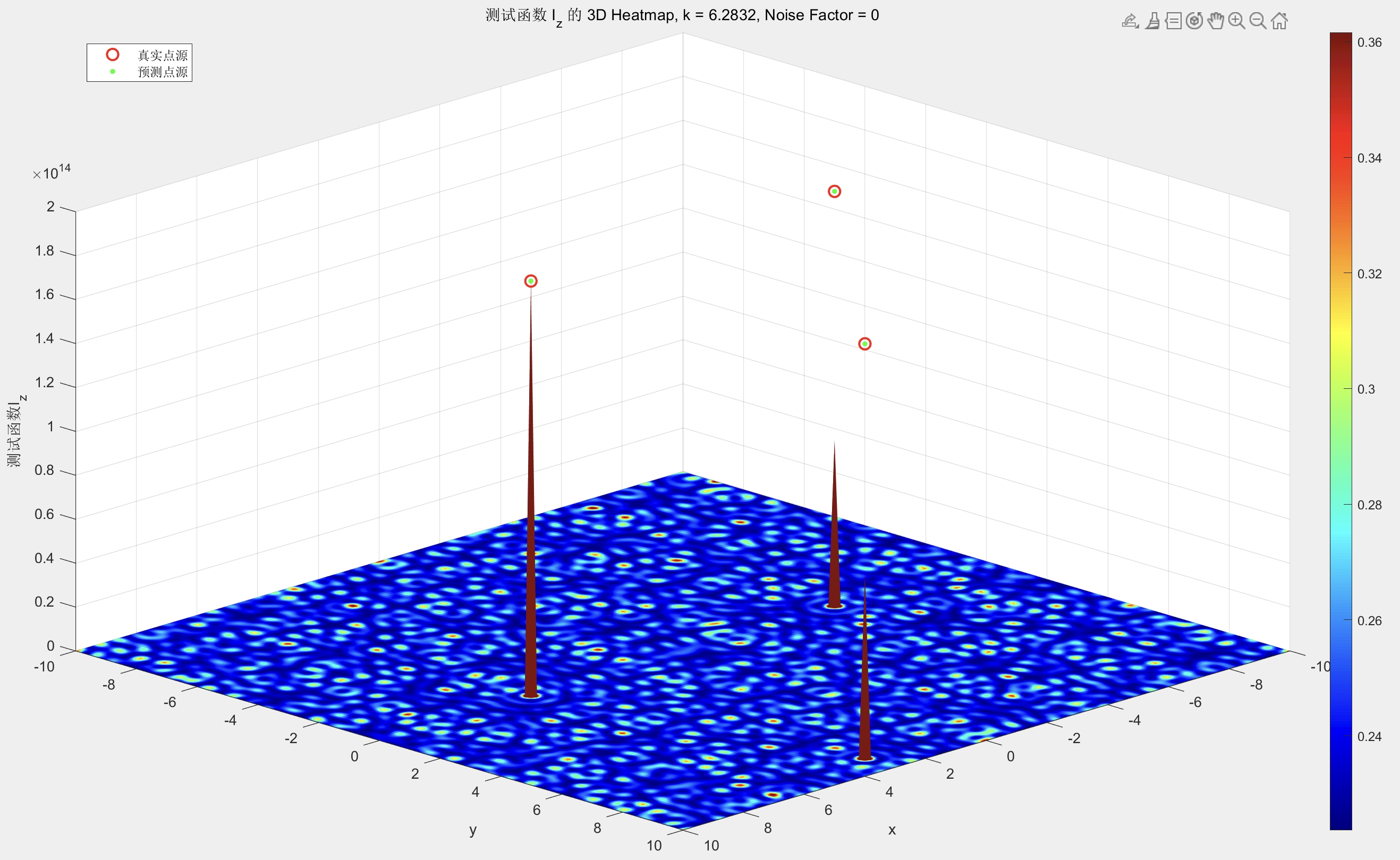}
			\caption{3D Heatmap of $ I_z $, $\delta$ = 0.}
			\label{fig:heatmap_no_noise}
		\end{subfigure}
		\hfill
		\begin{subfigure}[b]{0.48\textwidth}
			\centering
			\includegraphics[width=\textwidth]{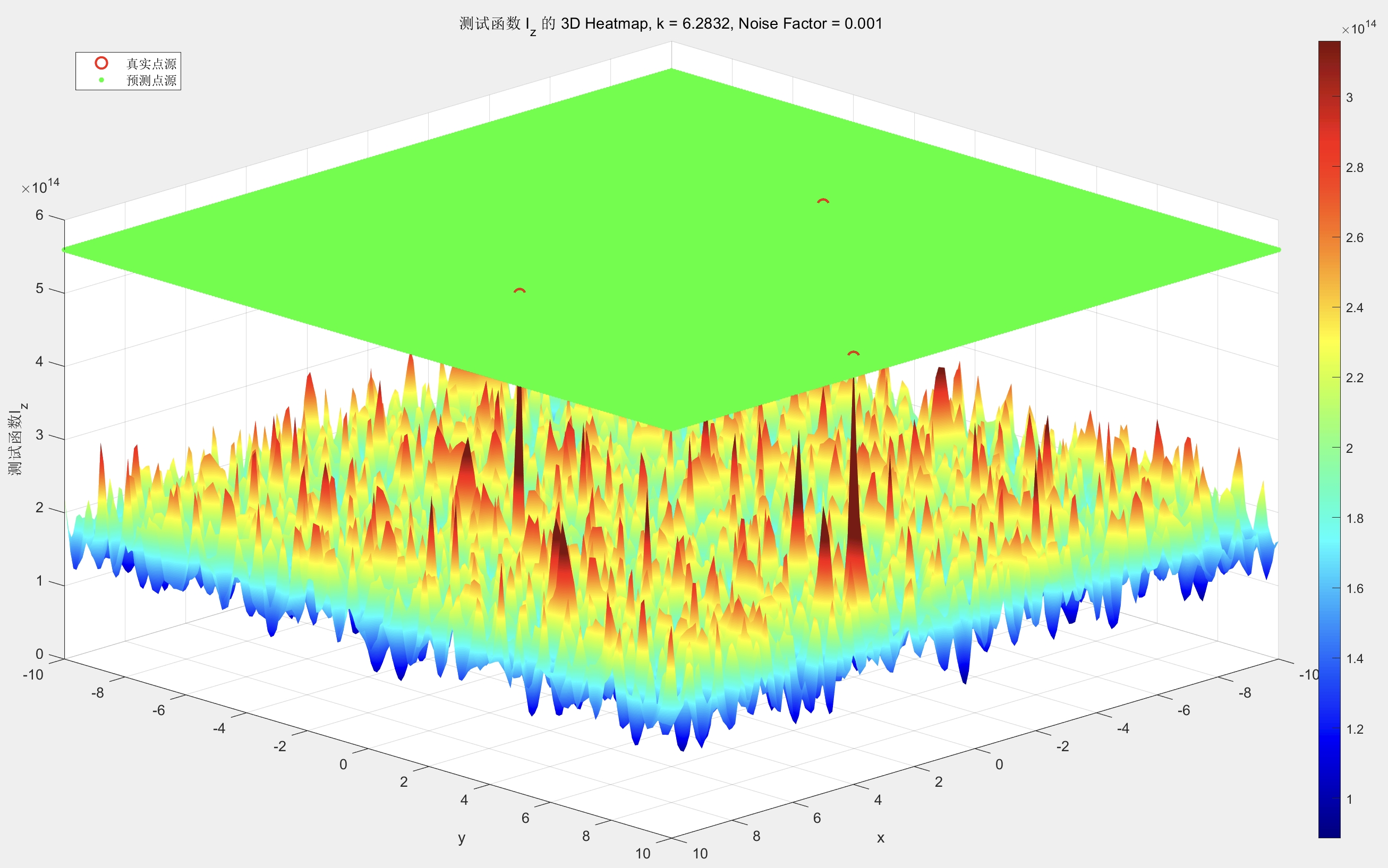}
			\caption{3D Heatmap of $ I_z $, $\delta$ = 0.001.}
			\label{fig:heatmap_with_noise}
		\end{subfigure}
		
		\begin{subfigure}[b]{0.48\textwidth}
			\centering
			\includegraphics[width=\textwidth]{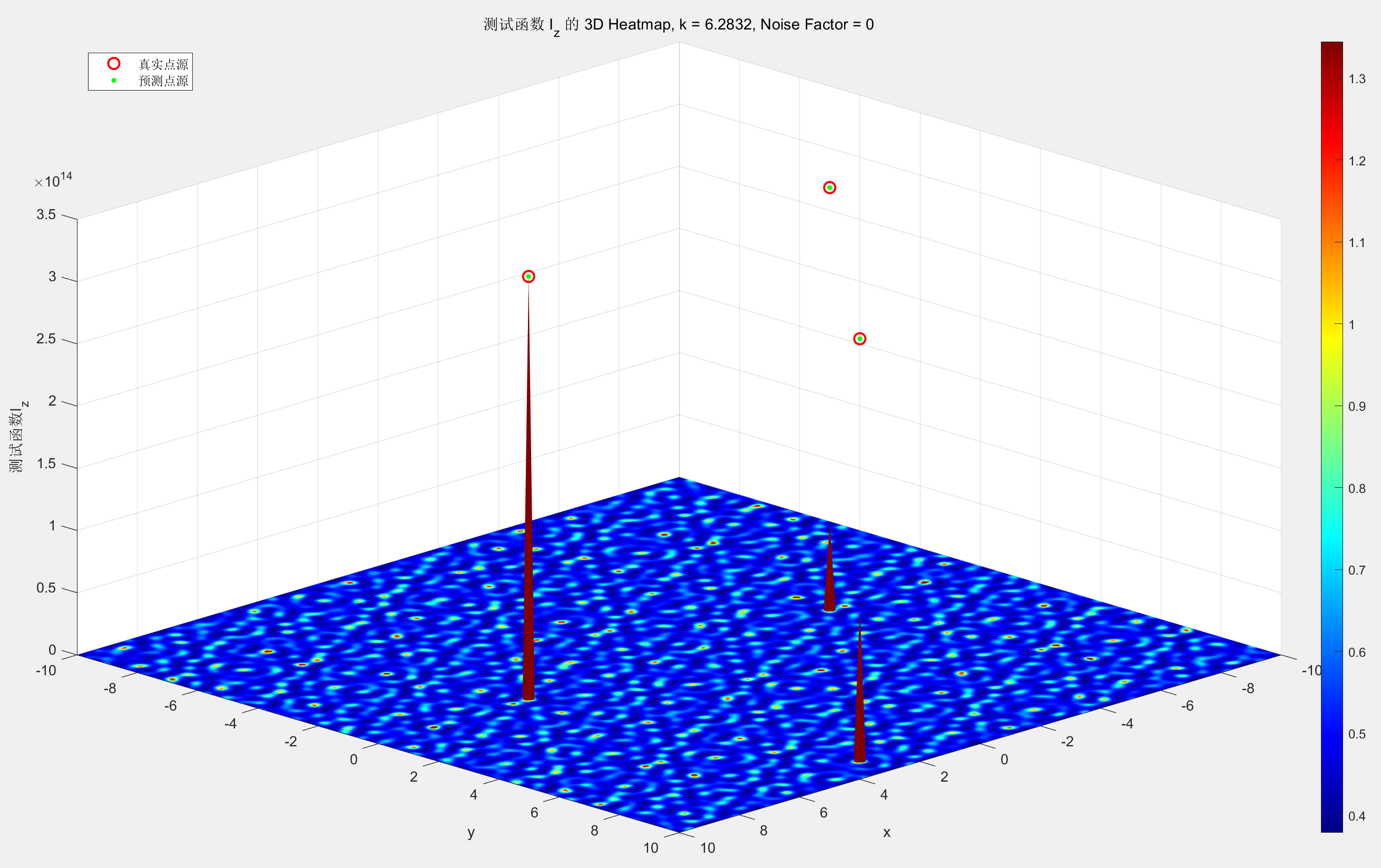}
			\caption{3D Heatmap of $ I_z $, using the pseudo-inverse of $ F $, $\delta$ = 0.}
			\label{fig:heatmap_pseudo_inverse}
		\end{subfigure}
		\hfill
		\begin{subfigure}[b]{0.48\textwidth}
			\centering
			\includegraphics[width=\textwidth]{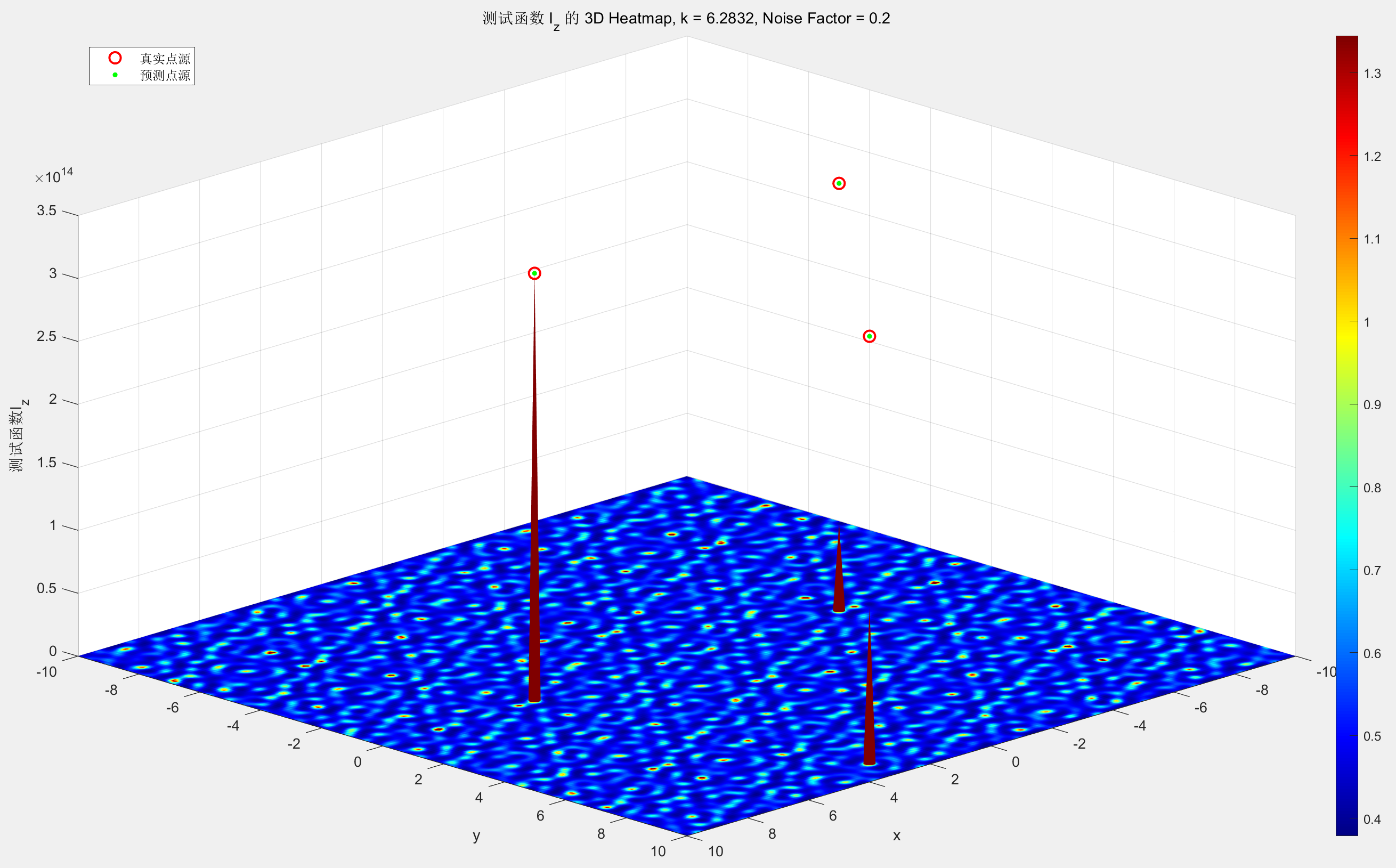}
			\caption{3D Heatmap of $ I_z $, $\delta$ = 0.2.}
			\label{fig:heatmap_with_noise_02}
		\end{subfigure}
		\caption{Comparison of 3D Heatmap results for $ I_z $, $k=6.2832$.}
		\label{fig:heatmaps}
	\end{figure}

In summary, using the pseudo-inverse of $F$, we effectively addressed the noise sensitivity issue of the original algorithm. The improved algorithm maintains good inversion performance under different noise levels, providing a more reliable solution for point source localization inversion in practical applications.

\subsection{}
To verify the effectiveness of the bar algorithm, we conduct five additional experiments: varying the number of point sources, different observation directions, different wave numbers, using different $\alpha$ values, and changing the noise levels. The detailed results are presented below, demonstrating the effectiveness and stability of the proposed algorithm.

\subsection*{Reconstruction of different numbers of point sources}

To validate the effectiveness of the algorithm, we conducted numerical experiments with four, five and six point sources. During the experiment, we divided the space with a step size of $0.1$, used $20$ observation directions, set the wave number $k = 2\pi$, used $\delta = 0.2$ and let $\alpha = [1+1i; 3+5i; -1+5i; i]$, $\alpha = [1+1i; 3+5i; -1+5i; i; -2+7i]$,  $\alpha = [1+1i; 3+5i; -1+5i; i; -2+7i; 6+3i]$ .
The positions of the point sources are as follows:
\begin{itemize}
	\item The first set of point source positions : $(3, -2), ( 5, 3), (-7, 9), (4, 8)$.
	\item The second set of point source positions : $(3, -2), (5, 3), (-7, 9), (4, 8), (-3,-2)$.
	\item The third set of point source positions : $(3, -2), (5, 3), (-7, 9), (4, 8), (-3,-2), (7, 8)$.
\end{itemize}
The generated images are shown in Figure~2.
\begin{figure}[ht]
			\centering
			\begin{subfigure}[b]{0.32\textwidth}
				\includegraphics[width=\linewidth]{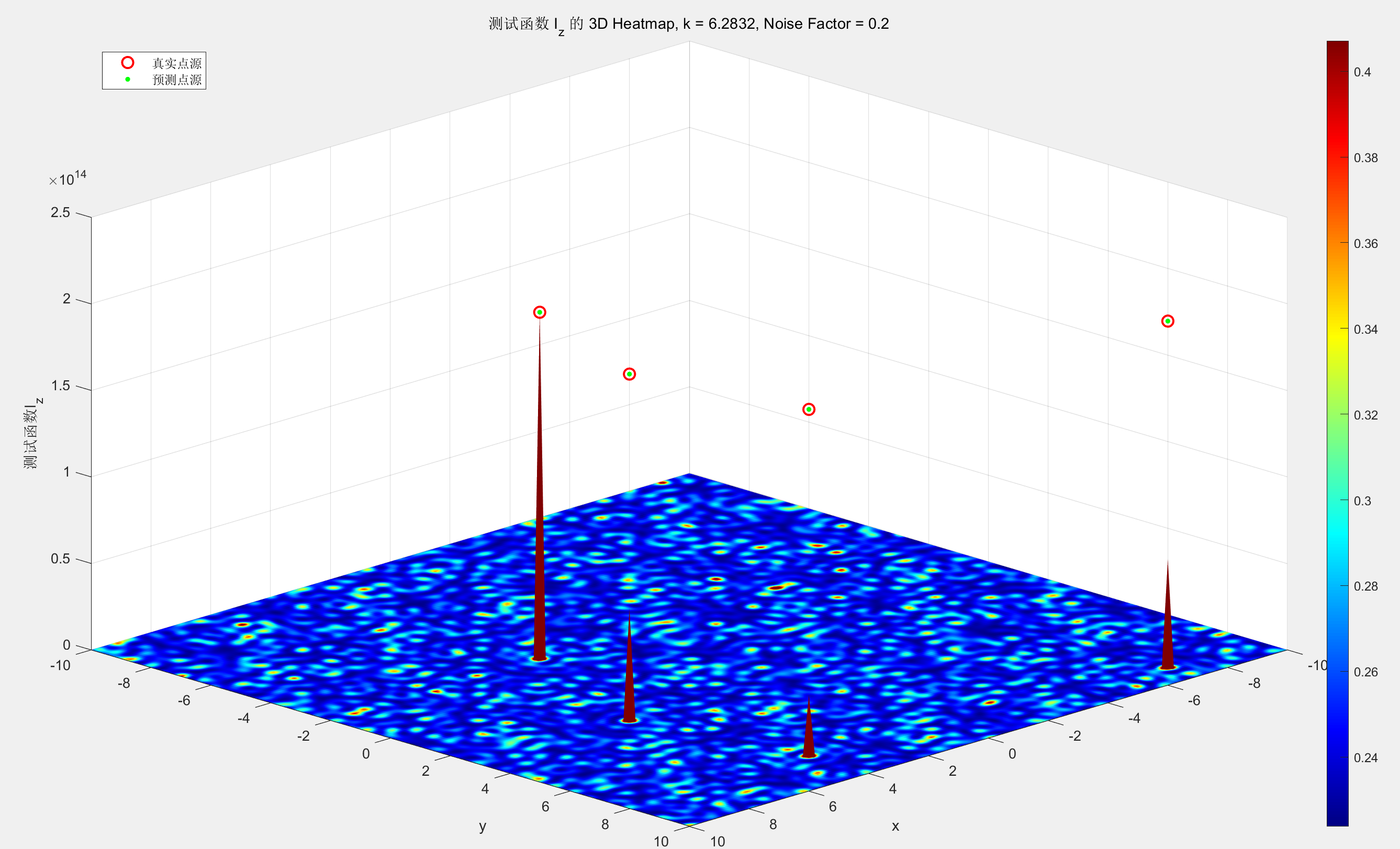}
				\caption{Four point sources}
				\label{fig:fourpointsources}
			\end{subfigure}
			\hfill
			\begin{subfigure}[b]{0.32\textwidth}
				\includegraphics[width=\linewidth]{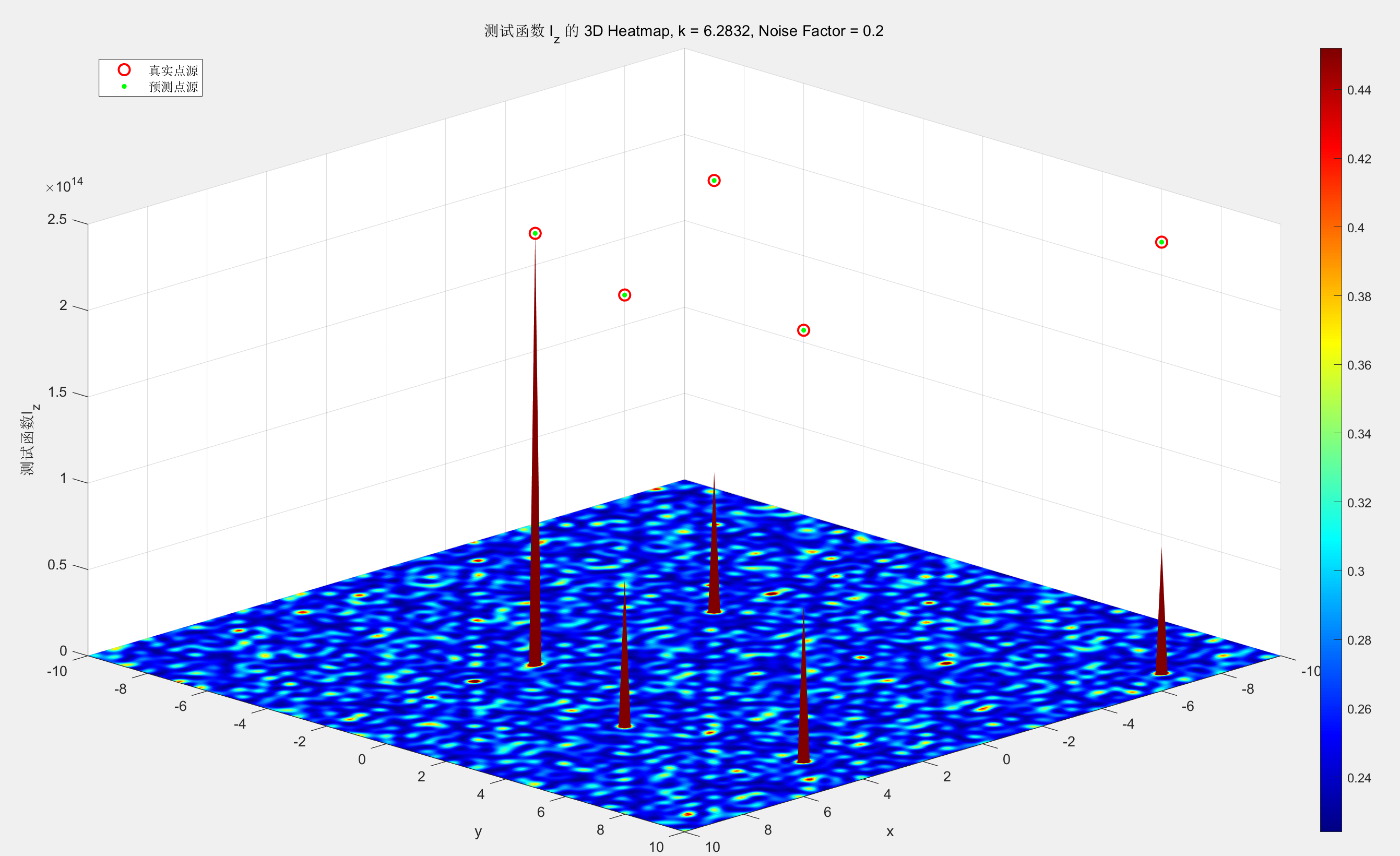}
				\caption{Five point sources}
				\label{fig:pointsourcesB}
			\end{subfigure}
			\hfill
			\begin{subfigure}[b]{0.32\textwidth}
				\includegraphics[width=\linewidth]{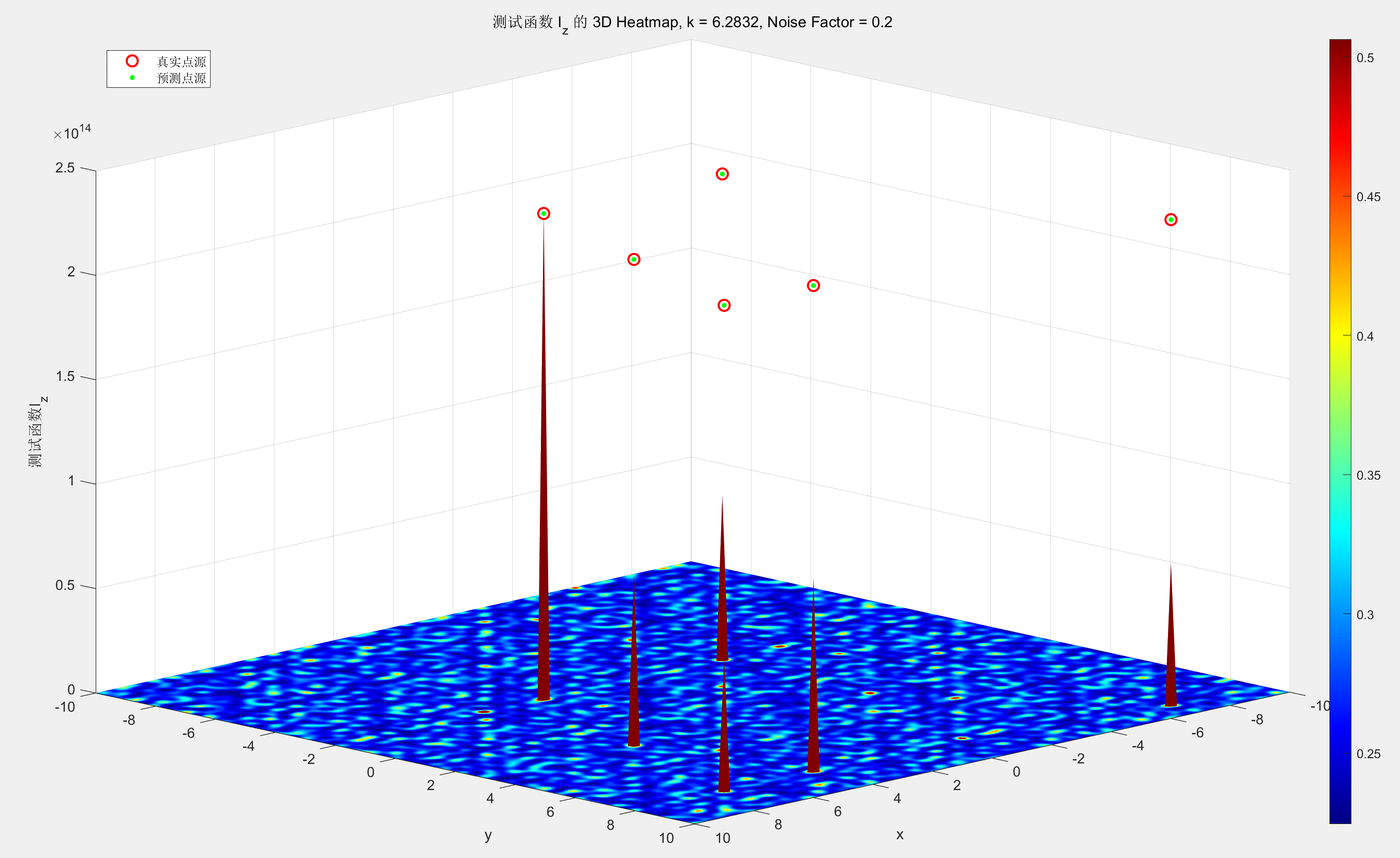}
				\caption{Six point sources}
				\label{fig:pointsourcesC}
			\end{subfigure}
			\caption{Reconstruction of different numbers of point sources.}
			\label{fig:numberofpointsources}
\end{figure}

From the images, it can be observed that when the number of true point sources is changed, the predicted point source locations still align well with the true point sources.
	
\subsection*{Using different numbers of observation directions}
To validate the effectiveness of the algorithm, we conducted numerical experiments with 30, 40, 50 observation directions. During the experiment, we divided the space with a step size of $0.1$, set the wave number $k = 2\pi$, used $\delta = 0.2$ and let $\alpha = [1+1i; 3+5i; -1+5i; i; -2+7i; 6+3i]$ .The positions of these point sources were set as $(3, -2), (5, 3), (-7, 9), (4, 8), (-3, -2), (7, 8)$.
The generated images are shown in Figure~3.
\begin{figure}[ht]
	\centering
	\begin{subfigure}[b]{0.32\textwidth}
		\includegraphics[width=\linewidth]{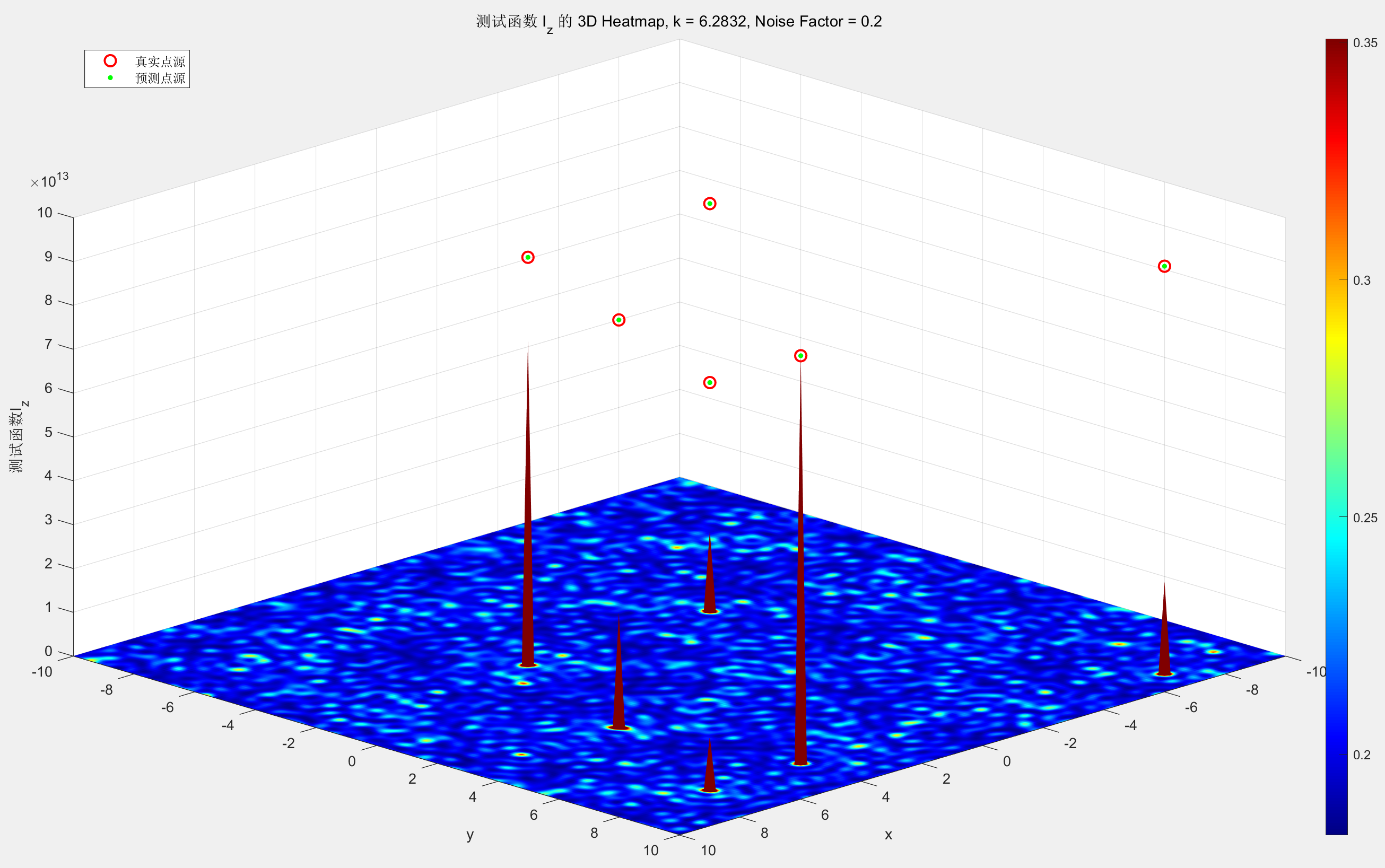}
		\caption{30 observation directions }
		\label{fig:observation directions 30.png}
	\end{subfigure}
	\hfill
	\begin{subfigure}[b]{0.32\textwidth}
		\includegraphics[width=\linewidth]{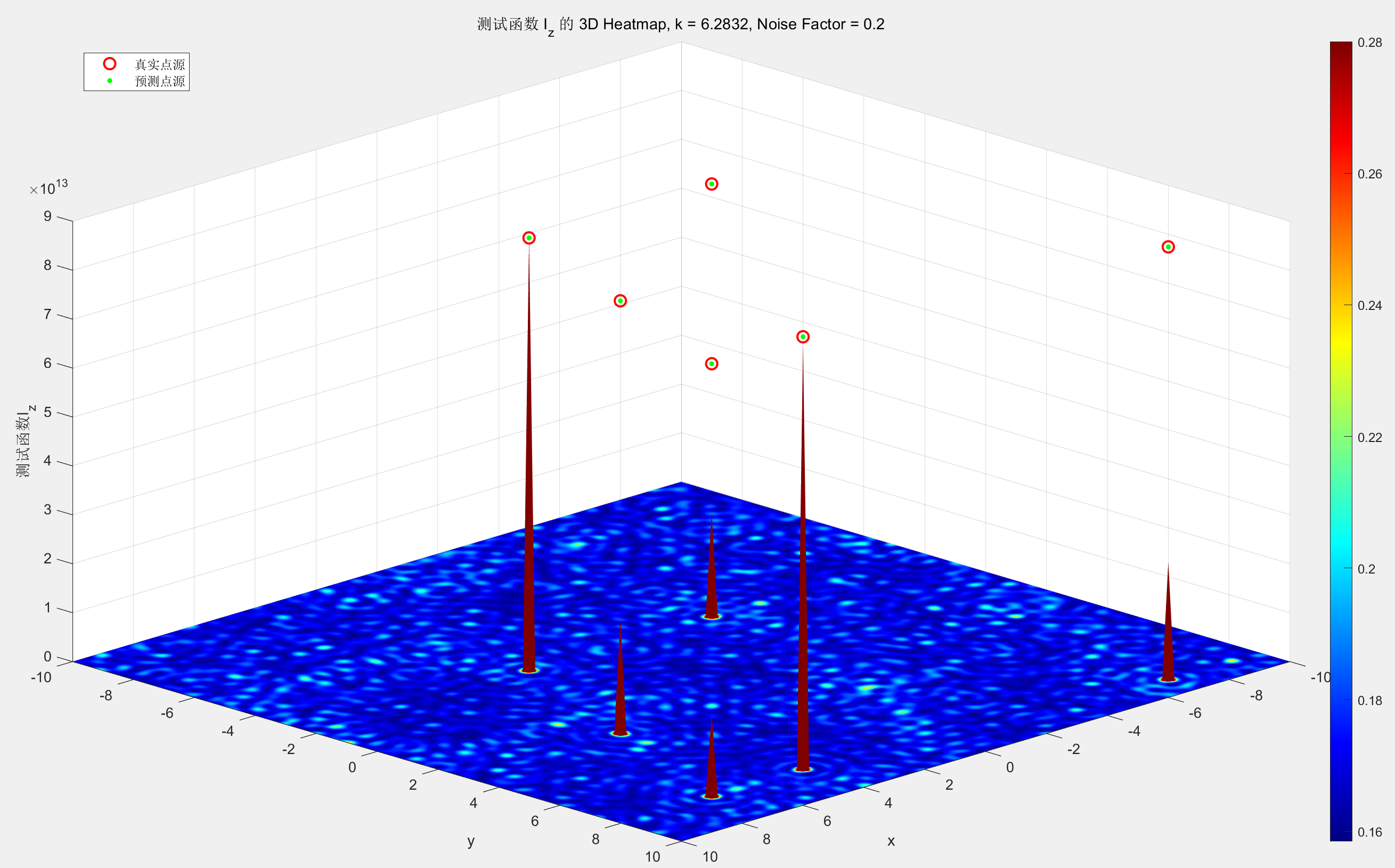}
		\caption{40 observation directions}
		\label{fig:observation directions 40.png}
	\end{subfigure}
	\hfill
	\begin{subfigure}[b]{0.32\textwidth}
		\includegraphics[width=\linewidth]{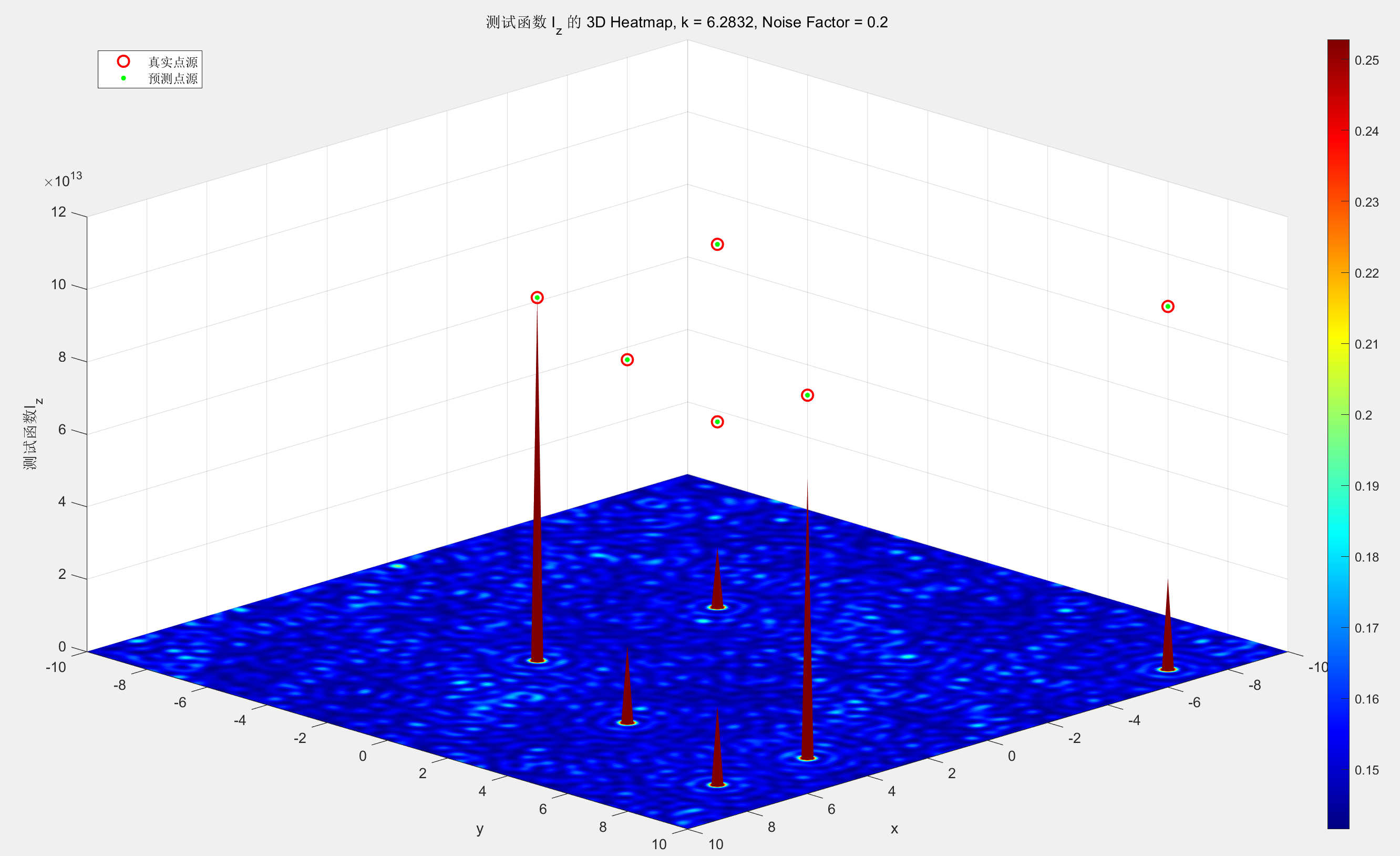}
		\caption{50 observation directions}
		\label{fig:observation directions 50.png}
	\end{subfigure}
	\caption{Using different numbers of observation directions.}
	\label{fig:observation directions}
\end{figure}

From the numerical experiments, it can be seen that when the number of arbitrarily selected observation directions exceeds the number of point sources, the inversion of the point source locations can be carried out normally.

\subsection*{Using different wave numbers}
To validate the effectiveness of the algorithm, we conducted numerical experiments with $k =$ $\pi$, $3\pi$, $4\pi$. During the experiment, we divided the space with a step size of $0.1$, set observation directions number $N = 20$, used $\delta = 0.2$ and let $\alpha = [1+1i; 3+5i; -1+5i; i; -2+7i; 6+3i]$ .The positions of these point sources were set as $(3, -2), (5, 3), (-7, 9), (4, 8), (-3, -2), (7, 8)$.
The generated images are shown in Figure~4.

\begin{figure}[ht]
	\centering	
	\begin{subfigure}[b]{0.32\textwidth}
		\includegraphics[width=\linewidth]{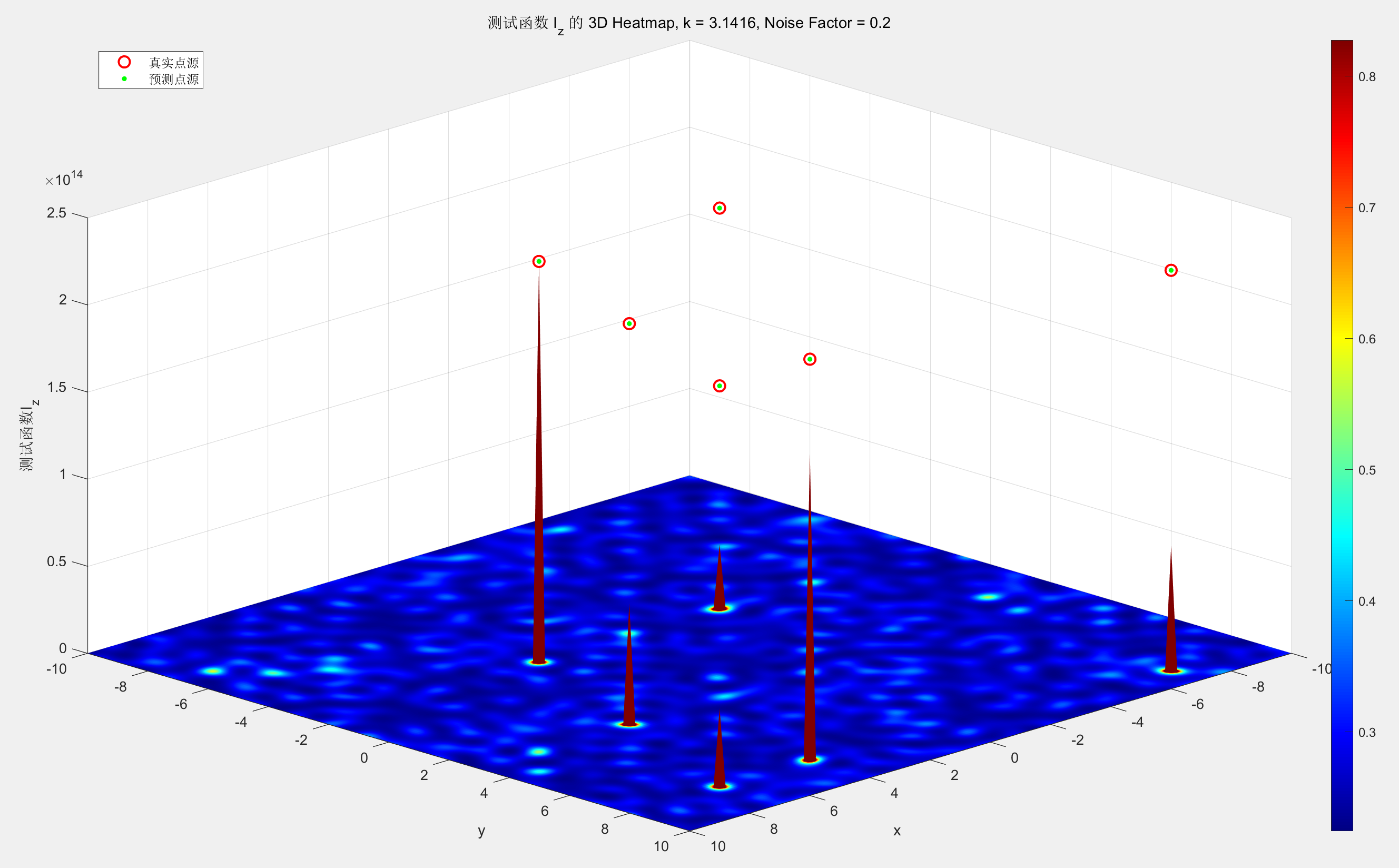}
		\caption{$k= \pi$}
		\label{fig:wave number pi.png}
	\end{subfigure}
	\begin{subfigure}[b]{0.32\textwidth}
		\includegraphics[width=\linewidth]{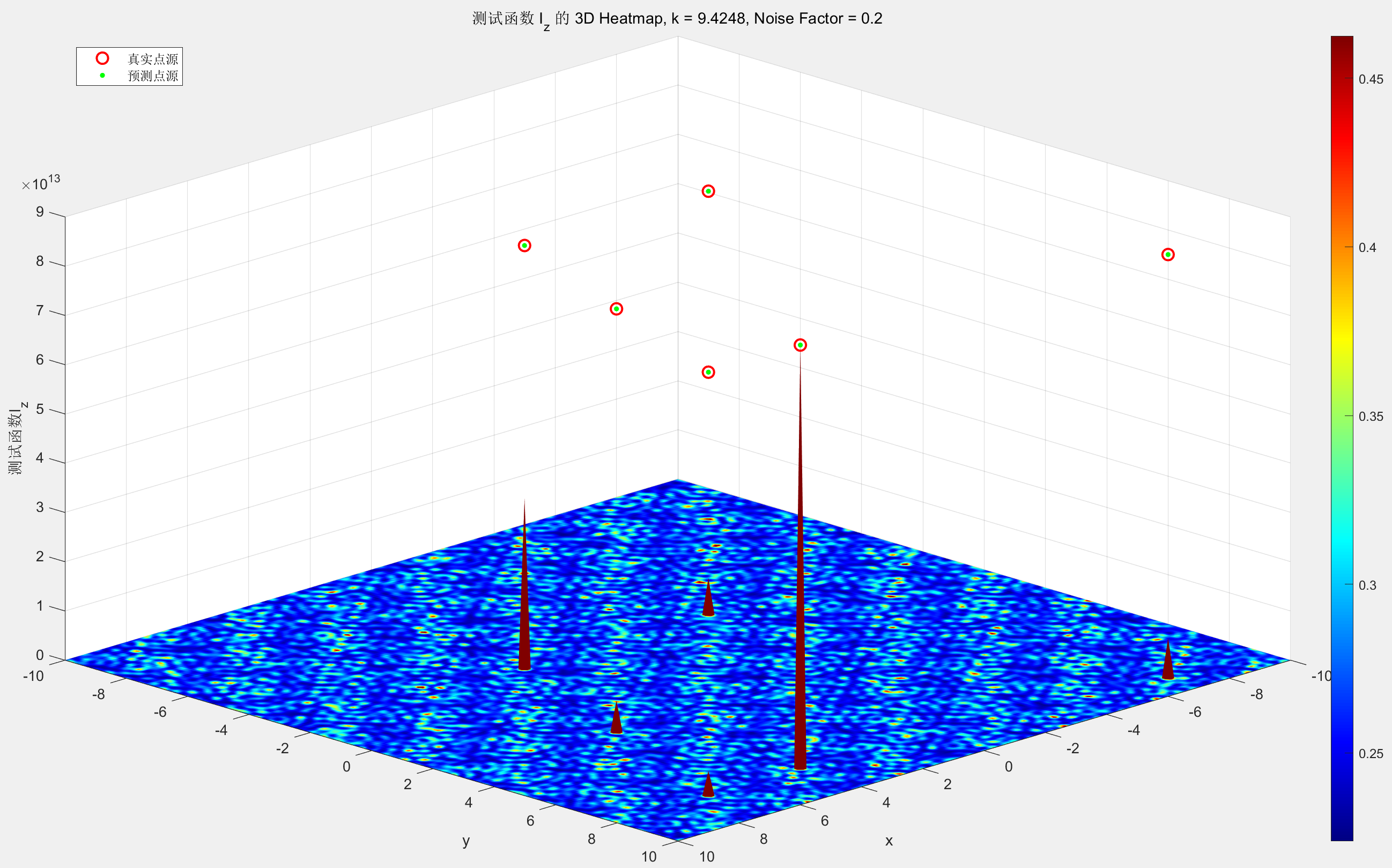}
		\caption{$k= 3\pi$}
		\label{fig:wave number 3pi.png}
	\end{subfigure}
	\hfill
	\begin{subfigure}[b]{0.32\textwidth}
		\includegraphics[width=\linewidth]{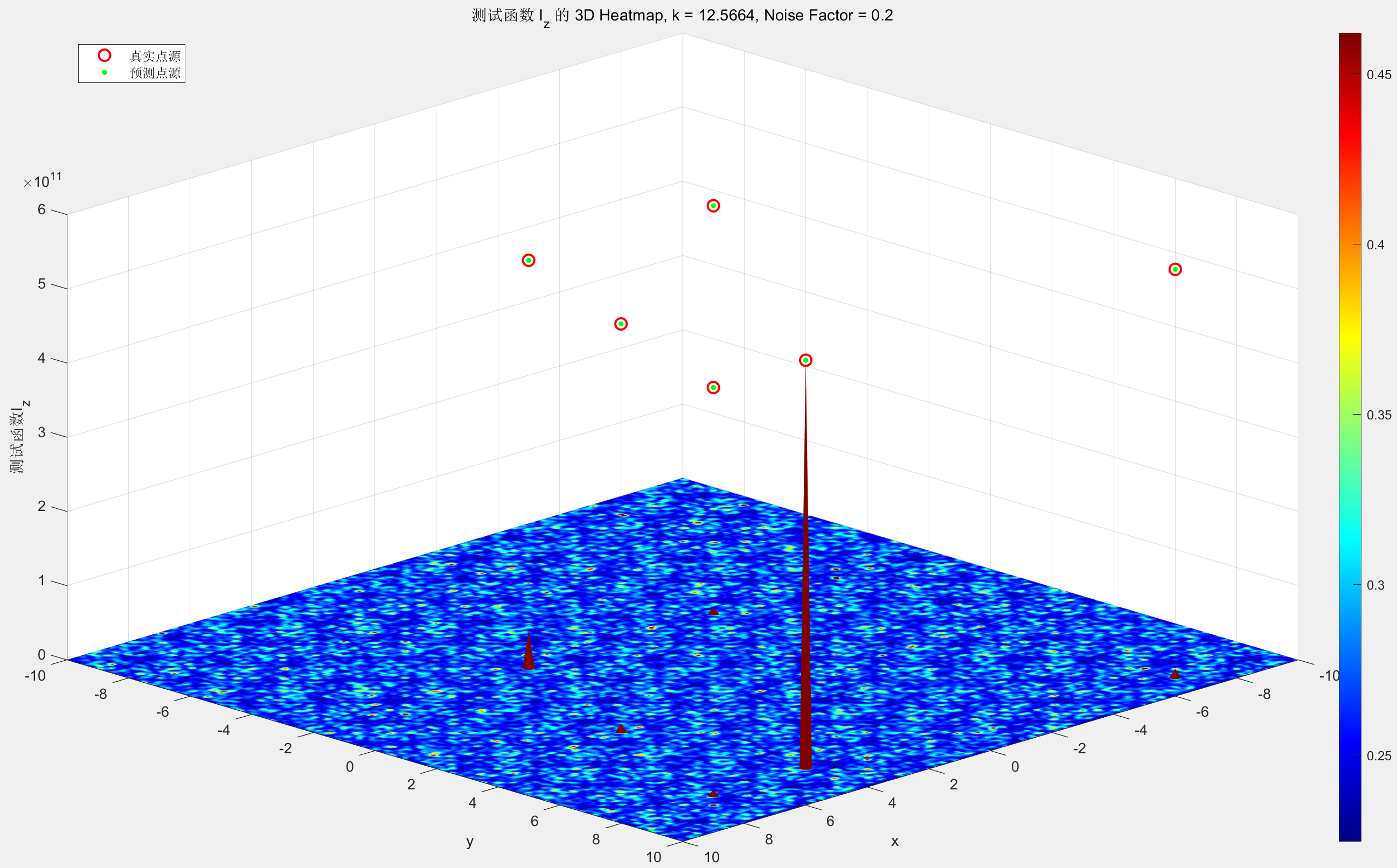}
		\caption{$k =4\pi$}
		\label{fig:wave number 4pi.png}
	\end{subfigure}
	\hfill

	\caption{Using different wave number.}
	\label{fig:wave number}
\end{figure}

From the images, it can be observed that different values of $k$ affect the values of the test function; however, the inversion of the point source locations can still be performed normally

\subsection*{Using different \texorpdfstring{$\alpha$}{alpha}}

To validate the effectiveness of the algorithm, we conducted numerical experiments with different $\alpha$. Let $\alpha_1=[1i; 5i; 7i; 4i; 9i; 10i]$, $\alpha_2=[3+2i; 5+3i; -1+5i; -4+1i; 2+7i; -3+6i]$, $\alpha_3=[1i; -3+5i; 5i; 5+8i; 7i; -6+3i]$. During the experiment, we divided the space with a step size of $0.1$, set observation directions number $N = 20$, used wave number $k=2\pi$ and used $\delta = 0.2$. The positions of these point sources were set as $(3, -2), (5, 3), (-7, 9), (4, 8), (-3, -2), (7, 8).$ 
The generated images are shown in Figure~5.

\begin{figure}[ht]
	\centering	
	\begin{subfigure}[b]{0.32\textwidth}
		\includegraphics[width=\linewidth]{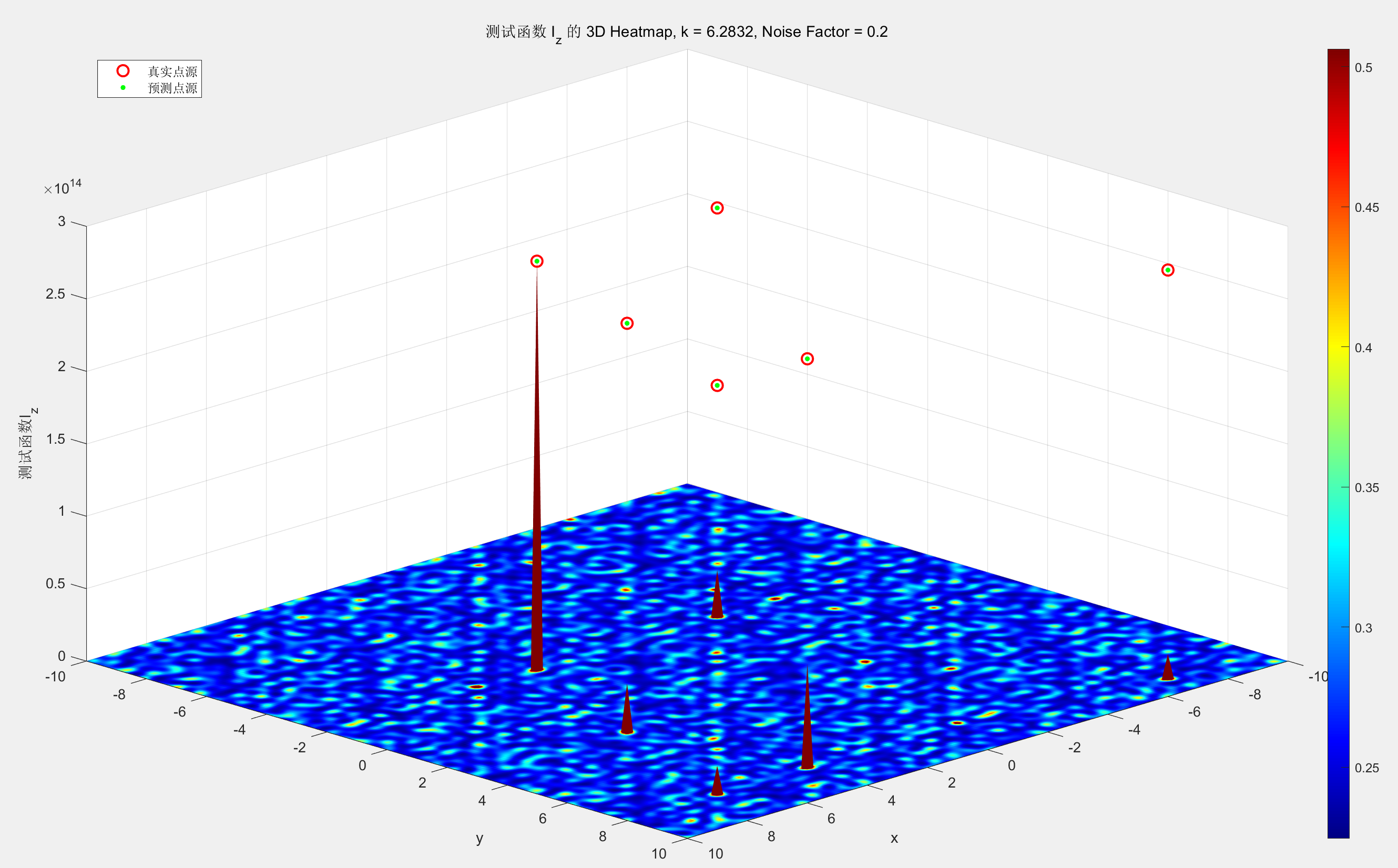}
		\caption{$\alpha_1$}
		\label{fig:alpha_1.png}
	\end{subfigure}
	\begin{subfigure}[b]{0.32\textwidth}
		\includegraphics[width=\linewidth]{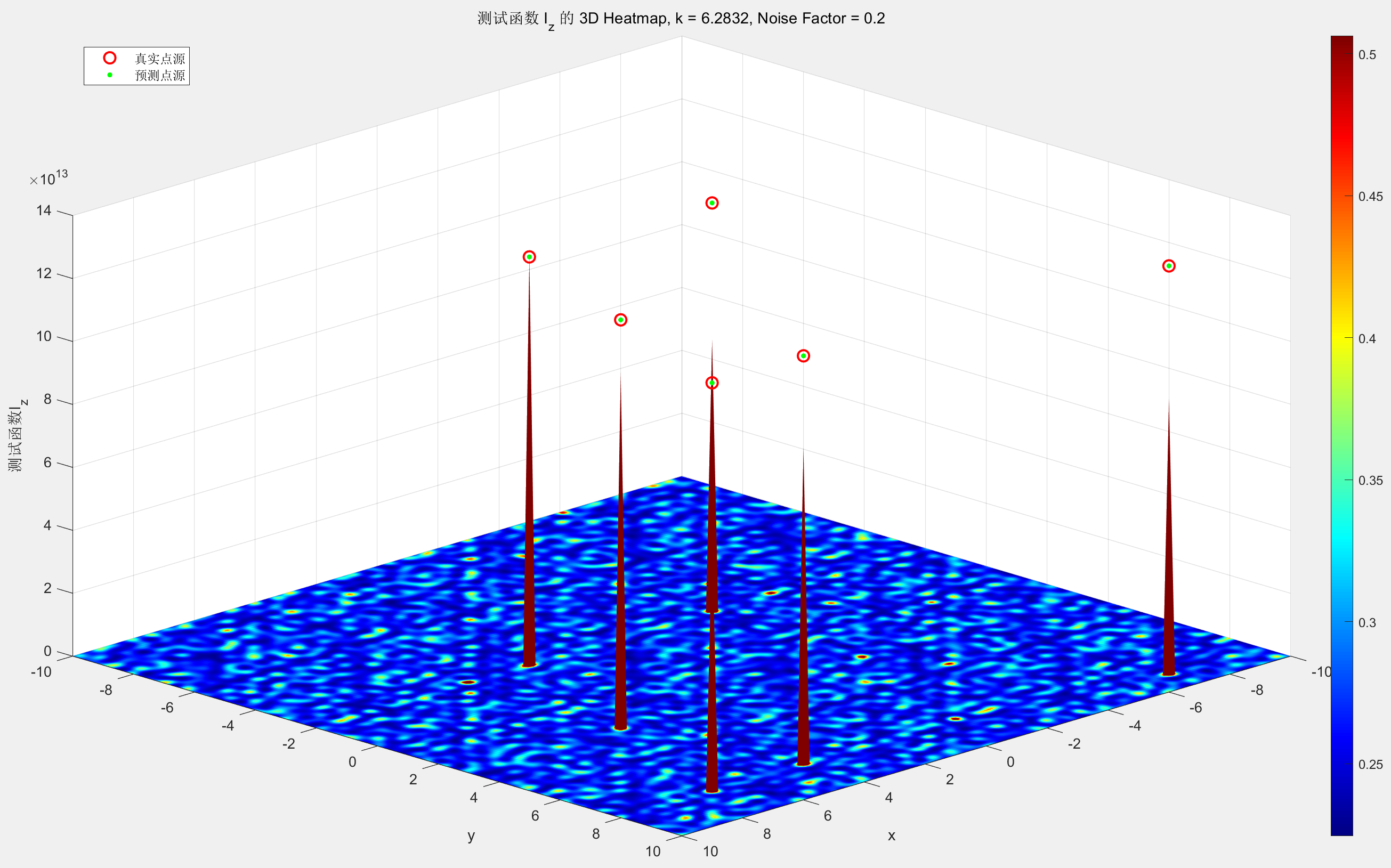}
		\caption{$\alpha_2$}
		\label{fig:alpha_2.png}
	\end{subfigure}
	\hfill
	\begin{subfigure}[b]{0.32\textwidth}
		\includegraphics[width=\linewidth]{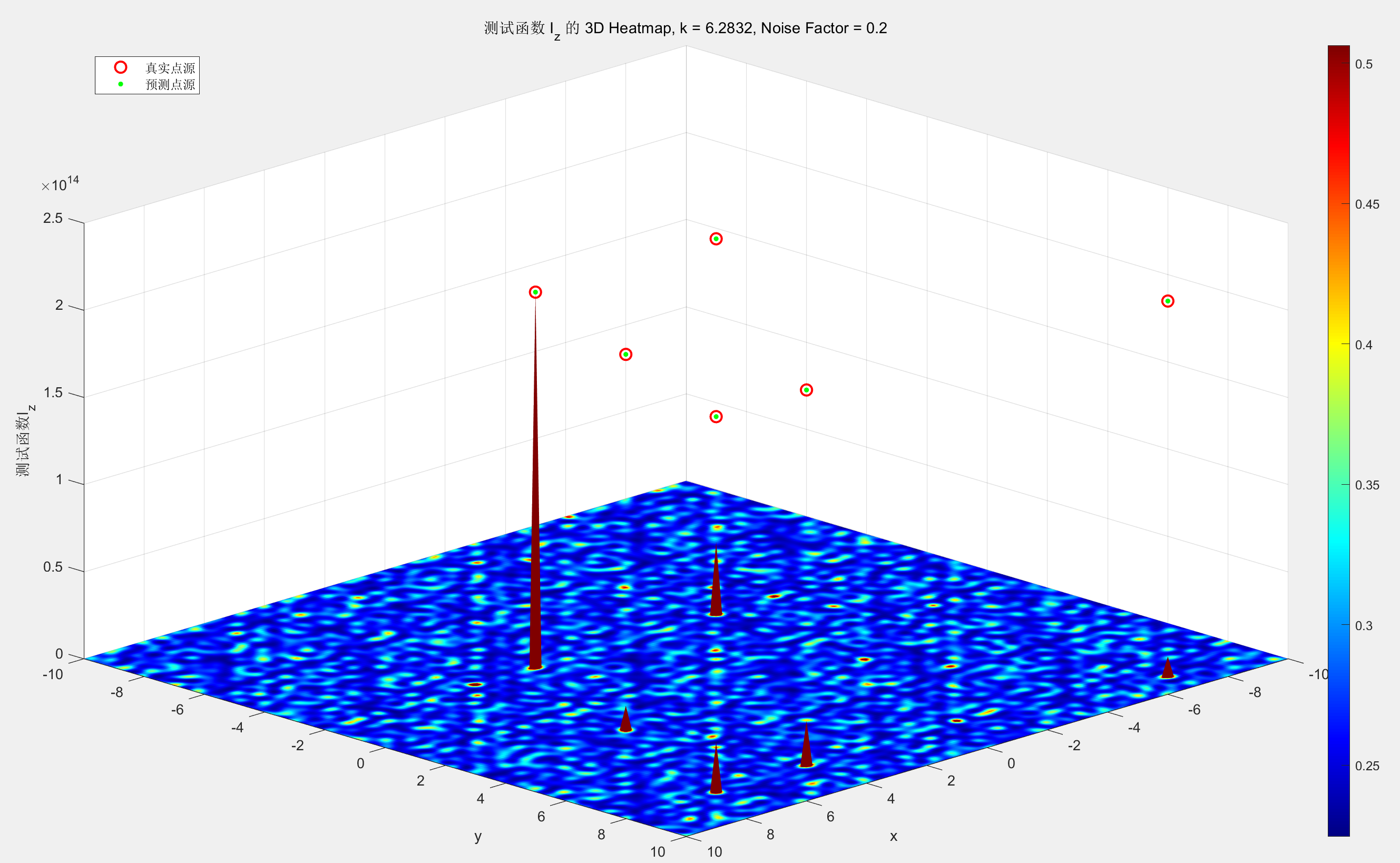}
		\caption{$\alpha_3$}
		\label{fig:alpha_3.png}
	\end{subfigure}
	\hfill
	
	\caption{Using different $\alpha$}
	\label{fig:different_alpha}
\end{figure}

The above numerical examples indicate that, even when $\alpha$ varies, it is still possible to correctly invert and determine the position of the point sources.

\subsection*{Using different \texorpdfstring{$\delta$}{delta}}

To validate the effectiveness of the algorithm, we conducted numerical experiments with different noise. We set $\delta = 0.5, 1, 2$. During the experiment, we divided the space with a step size of $0.1$, set observation directions number $N = 20$, used wave number $k=2\pi$, used $\delta = 0.2$ and set $\alpha=[1i; -3+5i; 5i; 5+8i; 7i; -6+3i]$. The positions of these point sources were set as$ (3, -2), (5, 3), (-7, 9), (4, 8), (-3, -2), (7, 8).$
The generated images are shown in Figure~5.

\begin{figure}[htbp]
	\centering
	\begin{subfigure}[b]{0.32\textwidth}
		\centering
		\includegraphics[width=\textwidth]{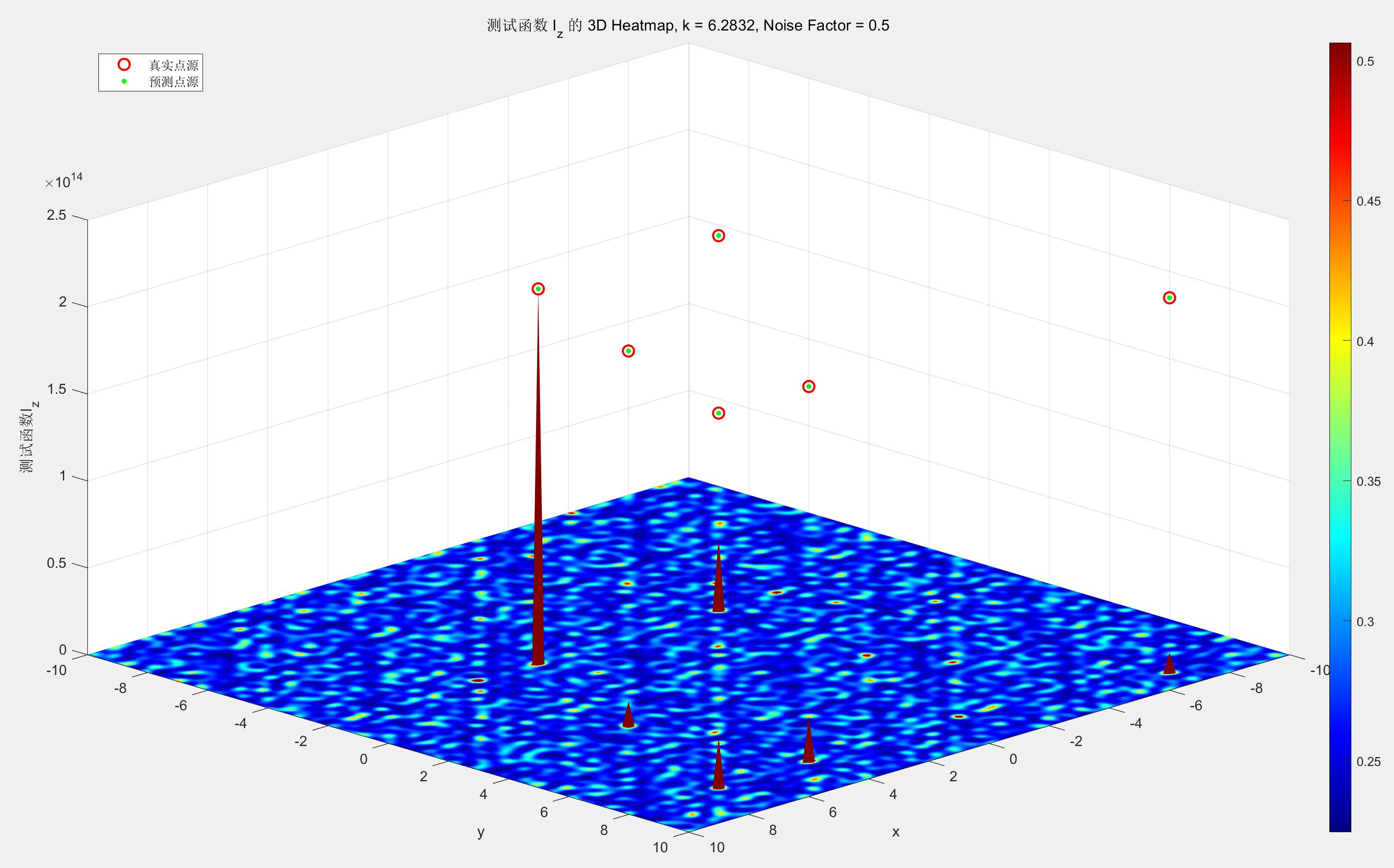}
		\caption{$\delta$ = 0.5}
		\label{fig:noise_factor_1.png}
	\end{subfigure}
	\hfill
	\begin{subfigure}[b]{0.32\textwidth}
		\centering
		\includegraphics[width=\textwidth]{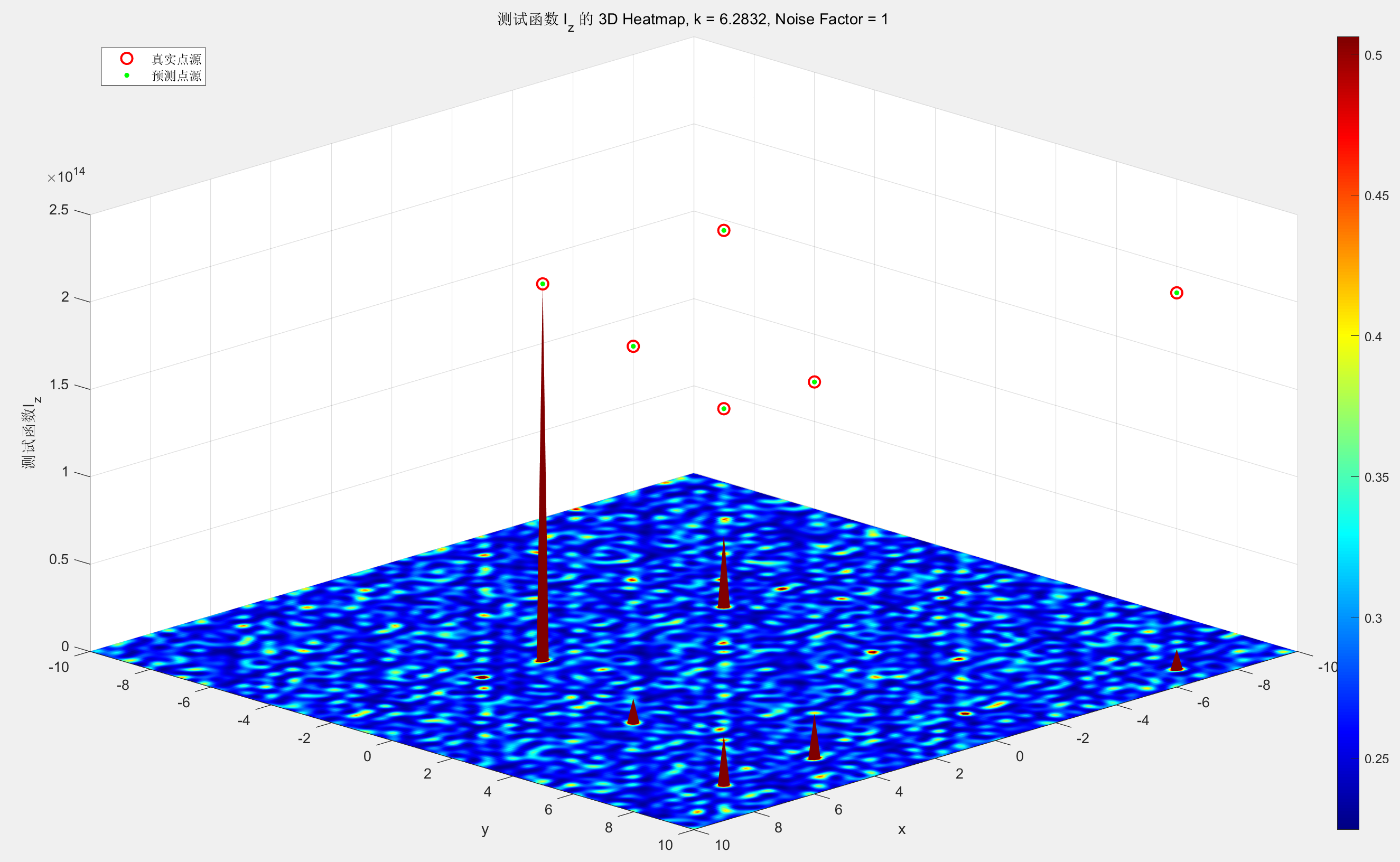}
		\caption{$\delta$ = 1}
		\label{fig:noise_factor_2.png}
	\end{subfigure}
	\hfill
	\begin{subfigure}[b]{0.32\textwidth}
		\centering
		\includegraphics[width=\textwidth]{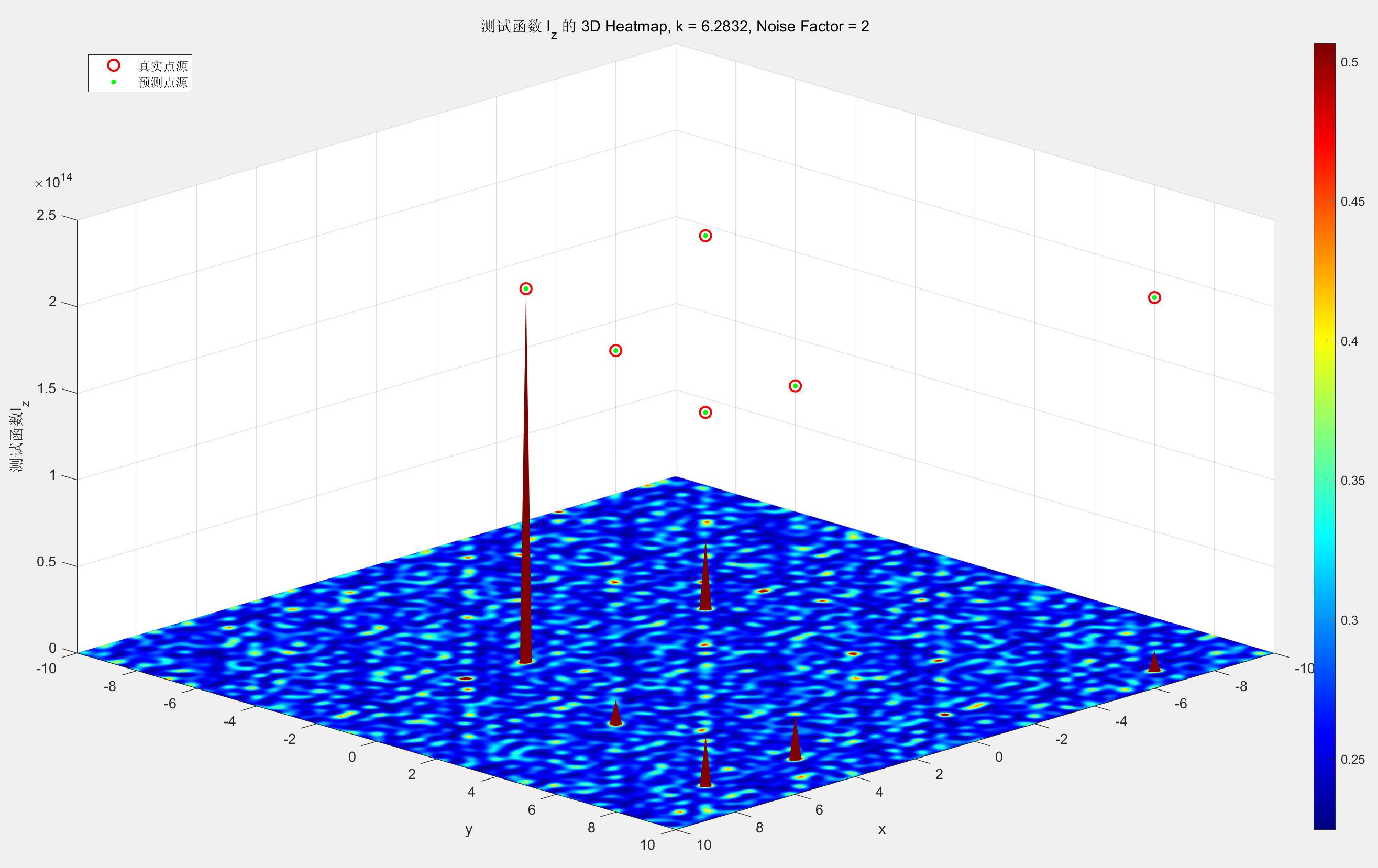}
		\caption{$\delta$ = 2}
		\label{fig:noise_factor_3.png}
	\end{subfigure}
	
	\caption{Using different $\delta$}
	\label{fig:different noisy}
\end{figure}

From the above images, it can be seen that the program designed in this paper is very stable. Even when there is a large amount of external noise, it can still accurately invert and determine the position of the point sources.


\end{document}